\newcommand{\alphau}{u}
\newcommand{\Tr}[1]{\mathrm{Tr}(#1)}
\newcommand{\phiR}{\phi_{{r}}}
\newcommand{\nt}{{\bar n_{\mathrm{ph}}}}
\newcommand{\superoperEvolution}{\mathbb{T_{\text{}}}}
\newcommand{\superoperInjection}{\mathbb{D_{\text{}}}}
\newcommand{\superoperMeasurement}{\mathbb{P_{\text{}}}}
\newcommand{\bid}{\text{\bf{I}}}
\newcommand{\ba}{\text{\bf{a}}}
\newcommand{\bN}{\text{\bf{N}}}
\newcommand{\nth}{n_{\text{th}}}
\newcommand{\nph}{n_{\text{ph}}}
\newtheorem{theorem}{Theorem}[section]%
\newtheorem{lem}{Lemma}[section]
\newtheorem{assum}{Assumption}
\newcommand{\RR}{{\mathbb R}}
\newcommand{\Id}{\bid}
\newcommand{\LL}{{\mathbb L}}
\newcommand{\bL}{{\mathbf L}}
\newcommand{\EE}[1]{{\text{\normalsize$\mathbb E$}}\left(#1\right)}
\newcommand{\CC}{{\mathbb C}}
\newcommand{\DD}{{\mathcal D}}
\newcommand{\HH}{{\mathcal H}}
\newcommand{\MM}{{\mathbb M}}
\newcommand{\KK}{{\mathbb K}}
\newcommand{\II}{{\Id}}
\newcommand{\bra}[1]{\left<#1\right|}
\newcommand{\ket}[1]{\left|#1\right>}
\newcommand{\tr}[1]{\mathrm{Tr}\left(#1\right)}
\newcommand{\bket}[1]{\left<#1\right>}
\newcommand{\nmax}{n_{\text{ph}}^{\text{\tiny max}}}
\newcommand{\hrho}{\widehat{\rho}}
\newcommand{\rhoe}{\rho^{\text{\tiny est}}}
\newcommand{\hrhoe}{\widehat{\rho}^{\text{\tiny est}}}
\newcommand{\hchi}{\widehat{\chi}}
\newcommand{\mur}{\mu^{\prime}}
\newcommand{\mr}{m^{\prime}}
\begin{document}

\title{Feedback stabilization of discrete-time  quantum systems subject to  non-demolition measurements with  imperfections and delays
\thanks{This work was supported in part by the "Agence Nationale de la Recherche" (ANR), Project QUSCO-INCA, Projet Jeunes Chercheurs EPOQ2 number ANR-09-JCJC-0070 and Projet Blanc CQUID number 06-3-13957, and by the EU under the IP project AQUTE and ERC project DECLIC. This work was also performed during the post-doctoral fellowship of A. Somaraju at Mines-ParisTech and INRIA during academic year 2010/2011.}}

\author{ H. Amini\thanks{Centre Automatique et Syst\`{e}mes, Math\'{e}matiques et Syst\`{e}mes, Mines ParisTech,
60 Boulevard Saint-Michel, 75272 Paris Cedex 6, France.}
\and
R. Somaraju\thanks{B-Phot, Department of Applied Physics and Photonics, Vrije Universiteit Brussel, Pleinlaan no. 2, Brussels - 1050 (Belgium).}
\and 
I. Dotsenko\thanks{Laboratoire Kastler-Brossel, ENS, UPMC-Paris 6, CNRS, 24 rue Lhomond, 75005 Paris, France.}
\and 
C. Sayrin\thanks{Laboratoire Kastler-Brossel, ENS, UPMC-Paris 6, CNRS, 24 rue Lhomond, 75005 Paris, France.}
\and 
M. Mirrahimi\thanks{INRIA Paris-Rocquencourt, Domaine de Voluceau, BP 105, 78153 Le Chesnay Cedex, France.}
\and 
P. Rouchon\thanks {Centre Automatique et Syst\`{e}mes, Math\'{e}matiques et Syst\`{e}mes, Mines ParisTech, 60 Boulevard Saint-Michel, 75272 Paris Cedex 6, France.}
 }
\maketitle

\paragraph{Keywords}                          
Quantum non-demolition measurements; Measurement-based feedback; Photon-number states (Fock
states); Quantum filter; Strict control Lyapunov function; Markov chain; Feedback stabilization.


\begin{abstract}

We consider  a controlled   quantum system  whose finite dimensional state is governed by a discrete-time nonlinear Markov process. In open-loop, the measurements are assumed to be quantum non-demolition (QND). The eigenstates of the measured observable are thus the open-loop stationary states: they are used to construct  a closed-loop supermartingale playing the role of a strict control Lyapunov function. The parameters of this supermartingale are  calculated by inverting a Metzler matrix that characterizes the impact of the control input on the Kraus operators defining the Markov process. The resulting state feedback scheme, taking into account  a known constant delay, provides the almost sure convergence of the controlled system to the target state. This convergence is ensured even in the case where the filter equation results from imperfect measurements corrupted by random errors  with  conditional probabilities given as  a  left stochastic matrix. Closed-loop simulations corroborated by experimental data illustrate the interest of such nonlinear feedback scheme for the photon box, a cavity quantum electrodynamics  system.

\end{abstract}

\section{Introduction}

Manipulating quantum systems allows one to accomplish tasks far beyond the reach of classical devices. Quantum information is paradigmatic in this sense: quantum computers will substantially outperform classical machines for several problems~\cite{nielsen-chang-book}. Though significant progress has been made recently, severe difficulties  still remain, amongst which decoherence is certainly the most important. Large systems consisting of many qubits must be prepared in fragile quantum states, which are rapidly destroyed by their unavoidable coupling to the environment. Measurement-based feedback and   coherent feedback  are  possible routes towards the preparation, protection  and stabilization of such states. For coherent feedback strategy,  the controller is also a quantum system coupled to the original one (see~\cite{gough-james:ieee09,gough-james:ieee10} and the references therein).  This paper is devoted to measurement-based feedback
where the controller and the control input are classical objects~\cite{wiseman-milburn:book}. The results presented  here  are  directly inspired  by   a recent experiment~\cite{sayrin-et-al:nature2011,sayrin:thesis}  demonstrating that such a quantum feedback scheme achieves   the on-demand preparation and stabilization of non-classical states of a  microwave field.

Following~\cite{geremia:PRL06} and relying on continuous-time Lyapunov techniques exploited in~\cite{mirrahimi-handel:siam07}, an initial measurement-based feedback  was proposed in~\cite{dotsenko-et-al:PRA09}. This feedback scheme  stabilizes  photon-number states (Fock states) of a microwave field (see e.g. \cite{haroche-raimond:book06} for a physical description of such cavity quantum electrodynamics (CQED) systems). The controller consists of a quantum filter that estimates   the  state of the field   from  discrete-time measurements performed by   probe atoms, and  secondly a stabilizing state-feedback that relies  on Lyapunov techniques. The discrete-time behavior is   crucial   for  a possible   real-time  implementation  of such  controllers. Closed-loop simulations reported  in~\cite{dotsenko-et-al:PRA09} have been confirmed by  the stability analysis performed  in~\cite{amini-et-al:ieee12}.  In the experimental implementation~\cite{sayrin-et-al:nature2011,sayrin:thesis}, the state-feedback has been improved by considering a strict Lyapunov function: this ensures better convergence properties by avoiding the passage by high photon numbers during the transient regime.  The goal of this paper is to present, for  a class  of discrete-time quantum systems,   the mathematical methods  underlying such improved Lyapunov design. Our  main result is given in Theorem~\ref{thm:obscontrollerMeas} where closed-loop  convergence  is proved  in presence of delays and measurement imperfections.

The state-feedback scheme  may be applied to generic discrete-time finite-dimensional  quantum systems using \emph{controlled} measurements in order to deterministically prepare and stabilize the system at some pre-specified target state. The dynamics of these systems may be expressed in terms of a (classical) nonlinear controlled Markov chain, whose state space consists of the set of density matrices on some Hilbert space. The jumps in this Markov chain are induced by quantum measurements and the associated jump probabilities are state-dependent. These systems  are  subject to a discrete-time sequence of positive operator valued measurements (POVMs \cite{nielsen-chang-book,haroche-raimond:book06}) and we use these POVMs to stabilize the system at the target state. By controlled measurements, we mean that at each time-step the chosen POVM is not fixed but is a function of some   classical control signal $u$, similar to~\cite{wiseman-doherty-PRL05}.  However, we  assume that  when the control $u$ is zero, the chosen POVM performs a quantum non-demolition (QND) measurement~\cite{haroche-raimond:book06,wiseman-milburn:book} for some orthonormal basis that includes the target state. The feedback-law  is based on a Lyapunov function that is a linear combination of a set of martingales corresponding to the open-loop QND measurements. This Lyapunov function  determines a ``distance'' between the target state and the current state. The parameters of this Lyapunov function are given by inverting Metzler  matrices characterizing the impact of the control input on the Kraus operators defining the Markov processes and POVMs. The (graph theoretic) properties of the Metzler  matrices are used to construct families of  open-loop supermartingales  that become strict supermartingales in closed-loop. This fact provides directly the convergence to the target state without using the invariance principle.

A common problem that occurs in quantum feedback control is that of delays between the measurement process and the control process~\cite{Nishio2009,kashima-yamamoto:ieee09}. In this paper we demonstrate, using a predictive quantum filter, that the proposed scheme works even in the presence of delays. Convergence analysis is done for perfect and imperfect  measurements.  For imperfect measurements, the dynamics of the system are governed by a nonlinear Markov chain given in~\cite{somaraju-et-al:acc2012}.

In both the perfect and imperfect measurement situations we prove a robustness property of the feedback algorithm: the convergence of the closed-loop system is ensured even when the feedback law is based on the state of a quantum filter that is not initialized correctly. This robustness property, is similar in spirit to the separation principle proven in~\cite{bouten-handel:2008,bouten-handel:2009}. We use the fact that the state space is a convex set and the target state, being a pure state, is an extreme point of this convex state space and therefore  cannot be expressed as a convex combination of any other states. One then uses the linearity of the conditional expectation to prove the robustness property. Our result is only valid for  target quantum states that are pure states.

The paper is organized as follows. In Section~\ref{sec:openloop}, we describe the finite dimensional  Markov model together with the main modeling assumptions in the case of perfect measurements and study the open-loop behavior (Theorem~\ref{thm:first}) which can be seen as a  non-deterministic protocol for  preparing a finite number of isolated and orthogonal  quantum states. In Section~\ref{sec:closedloop}, we  present  the main ideas underlying the construction of these control-Lyapunov functions $W_\epsilon$: a Metzler matrix  attached to the second derivative of the measurement operators and a technical lemma  assuming this Metzler matrix is irreducible. Finally,  Theorem~\ref{thm:main} describes the stabilizing state feedback derived from $W_\epsilon$. The same analysis is done for the case of imperfect measurements in Section~\ref{sec:imperfect}. For the state estimations used in the feedback scheme we propose a  brief discussion on the quantum filters and  prove a rather general robustness property for perfect measurements in Section~\ref{sec:closedloop} with Theorem~\ref{thm:obscontroller} and for imperfect ones in Section~\ref{sec:imperfect} with Theorem~\ref{thm:obscontrollerMeas}. Section~\ref{sec:photonbox} is devoted to the experimental implementation that has been done at Laboratoire Kastler-Brossel of  Ecole Normale Sup\'{e}rieure de Paris.  Closed-loop simulations and experimental data  complementary to those reported in~\cite{sayrin-et-al:nature2011,sayrin:thesis} are presented.
\section{System model and open-loop dynamics} \label{sec:openloop}
\subsection{The nonlinear Markov model} \label{ssec:model}
We consider a finite dimensional quantum system (the underlying Hilbert space $\HH=\CC^d$ is of  dimension $d>0$)
being measured through a generalized measurement procedure at discrete-time intervals.
The dynamics  are described by a nonlinear controlled  Markov chain. Here, we suppose perfect measurements and no decoherence.
The system state is described by a density operator $\rho$ belonging to $\DD$,  the set of non-negative Hermitian matrices of trace one:
$
\DD :=\{\rho\in\CC^{ d\times d}~|\quad\rho=\rho^\dag,\quad\tr{\rho}=1,\quad\rho\geq 0\}.
$
To each measurement outcome $\mu\in \{1,\ldots,m\}$, $m$ being the numbers of possible outcomes,  is attached
the Kraus operator $M_{\mu}^u\in \CC^{ d\times d}$ depending on $\mu$ and  on a scalar control input $u\in\RR$. For each $u$,
 $(M_\mu^u)_{\mu\in\{1,\ldots,m\}}$ satisfy the constraint $\sum_{\mu=1}^{m}{M_{\mu}^u}^\dag M_{\mu}^u=\II$, the
identity matrix. The Kraus map $\KK^u$ is defined by
 \begin{equation}\label{eq:kraus}
 \DD \ni \rho \mapsto \KK^u(\rho)= \sum_{\mu=1}^m M_\mu^u \rho {M_\mu^u}^\dag \in \DD
 .
 \end{equation}
The random evolution of the state $\rho_k\in\DD $ at time-step $k$ is  modeled through the following dynamics:
\begin{equation}\label{eq:dynamic}
 \rho_{k+1}=\MM_{\mu_k}^{u_{k-\tau}}(\rho_k):=
     \frac{M_{\mu_k}^{u_{k-\tau}}\rho_k  \left(M_{\mu_k}^{u_{k-\tau}}\right)^\dag}
    {\tr{M_{\mu_k}^{u_{k-\tau}}\rho_k \left(M_{\mu_k}^{u_{k-\tau}}\right)^\dag}},
\end{equation}
where $u_{k-\tau}$ is the control at step $k$, subject to a delay of $\tau>0$ steps. This delay is usually due to  delays in the measurement process that  can also be seen as delays in the control process.  $\mu_k$ is a random variable taking values $\mu$ in $\{1,\ldots,m\}$  with probability $p_{\mu,\rho_k}^{u_{k-\tau}}=\tr{M_{\mu}^{u_{k-\tau}}\rho_k \left(M_{\mu}^{u_{k-\tau}}\right)^\dag}$. For each $\mu$,  $\MM_{\mu}^u(\rho)$ is  defined  when  $p_{\mu,\rho}^u=\tr{M_{\mu}^u\rho \left(M_{\mu}^u\right)^\dag} \neq 0$.

We now state some assumptions that we will be using in the remainder of this paper.
\begin{assum}\label{assum:imp}
For $u=0$, all $M_\mu^0$ are diagonal in the same orthonormal  basis $\{\, \ket{n}\, |\, n\in\{1,\ldots, d\}\}$: $M_\mu^0=\sum_{n=1}^{ d}c_{\mu,n}\ket{n}\bra{n}$
with $c_{\mu,n}\in\CC.$
\end{assum}
\begin{assum}\label{assum:impt}
 For all $n_1\neq n_2$ in $\{1,\ldots, d\},$ there~exists $\mu\in\{1,\ldots,m\}$ such that $|c_{\mu,n_1}|^2\neq |c_{\mu,n_2}|^2.$
\end{assum}
\begin{assum}\label{assum:imo} All $M_\mu^u$ are  $C^2$ functions of $u$.
\end{assum}
Assumption~\ref{assum:imp} means that when $u=0$ the  measurements are
quantum non demolition (QND) measurements  over the states
$\bigl\{\ket{n}\,|\quad n\in\{1,\ldots, d\}\bigr\}$: when $u_k\equiv 0$, any $\rho=\ket{n}\bra{n}$
(orthogonal projector on the basis vector $\ket{n}$)  is a fixed point of~\eqref{eq:dynamic}.
Since $\sum_{\mu=1}^{m}{M_{\mu}^0}^{\dag}M_{\mu}^0=\II$, we have
$\sum_{\mu=1}^{m}|c_{\mu,n}|^2=1$ for all $n\in\{1,\ldots, d\}$ according to Assumption~\ref{assum:imp}.
Assumption~\ref{assum:impt} means that there exists a $\mu$ such that the statistics  when  $u_{k}\equiv 0$  for obtaining
the measurement result $\mu$ are different for the fixed points $\ket{n_1}\bra{n_1}$  and $\ket{n_2}\bra{n_2}$.  This follows by
noting that $\tr{M_{\mu}^0\ket{n}\bra{n}{M_{\mu}^0}^\dag}=|c_{\mu,n}|^2$ for $n\in\{1,\ldots, d\}$.
Assumption~\ref{assum:imo} is a technical assumption we will use in our proofs.
\subsection{Convergence of the open-loop dynamics}\label{ssec:openloop}
When the control input vanishes ($u\equiv 0$), the dynamics are simply given by
\begin{equation}\label{eq:dynopen}
\rho_{k+1} = \MM_{\mu_k}^0(\rho_k),
\end{equation}
where $\mu_k$ is a random variable with values in $\{1,\ldots,m\}$. The probability $p_{\mu,\rho_k}^{0}$ to have $\mu_k=\mu$ depends
on $\rho_k$: $p_{\mu,\rho_k}^0=\tr{M_{\mu}^{0}\rho_k {M_{\mu}^{0}}^\dag}$.
\begin{theorem}\label{thm:first}
Consider a Markov process $\rho_k$ obeying the dynamics of~\eqref{eq:dynopen} with an initial condition $\rho_0$ in $\DD $. Then
 with probability one, $\rho_k$ converges  to one of the $ d$ states $\ket{n}\bra{n}$ with $n\in\{1,\ldots, d\}$ and
the probability of convergence towards the state $\ket{n}\bra{n}$  is given by
$\bra{n}\rho_0\ket{n}.$
\end{theorem}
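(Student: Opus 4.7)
The plan is to use the diagonal entries $\pi_n(\rho) := \bra{n}\rho\ket{n}$ (in the QND basis of Assumption~\ref{assum:imp}) as the main handles, exploit that they form martingales, and extract a strict supermartingale from each pair in order to force the diagonal to concentrate on a single index.

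First I would check that for any $n$, the process $\pi_n(\rho_k)$ is a bounded martingale. Since $M_\mu^0$ is diagonal with entries $c_{\mu,n}$ and $\sum_\mu |c_{\mu,n}|^2 = 1$, the recursion~\eqref{eq:dynopen} gives
\begin{equation*}
\pi_n(\rho_{k+1}) = \frac{|c_{\mu_k,n}|^2 \pi_n(\rho_k)}{p_{\mu_k,\rho_k}^0}, \qquad p_{\mu,\rho_k}^0 = \sum_{n'} |c_{\mu,n'}|^2 \pi_{n'}(\rho_k),
\end{equation*}
from which $\EE{\pi_n(\rho_{k+1}) \mid \rho_k} = \sum_\mu |c_{\mu,n}|^2 \pi_n(\rho_k) = \pi_n(\rho_k)$. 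Boundedness in $[0,1]$ gives a.s.\ convergence $\pi_n(\rho_k) \to \pi_n^\infty$ and $\EE{\pi_n^\infty} = \pi_n(\rho_0) = \bra{n}\rho_0\ket{n}$.

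Next I would show that for every pair $n_1 \neq n_2$ the quantity $V_{n_1 n_2}(\rho_k) := \sqrt{\pi_{n_1}(\rho_k)\pi_{n_2}(\rho_k)}$ is a strict supermartingale. The same recursion gives
\begin{equation*}
\EE{V_{n_1 n_2}(\rho_{k+1}) \mid \rho_k} = \Bigl(\sum_\mu |c_{\mu,n_1}||c_{\mu,n_2}|\Bigr)\, V_{n_1 n_2}(\rho_k).
\end{equation*}
By Cauchy--Schwarz the factor is at most $\sqrt{\sum_\mu |c_{\mu,n_1}|^2}\sqrt{\sum_\mu |c_{\mu,n_2}|^2}=1$, with equality iff the sequences $(|c_{\mu,n_1}|)_\mu$ and $(|c_{\mu,n_2}|)_\mu$ are proportional; since both have unit $\ell^2$-norm and nonnegative entries, this would force $|c_{\mu,n_1}|=|c_{\mu,n_2}|$ for every $\mu$, contradicting Assumption~\ref{assum:impt}. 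Hence there exists $\lambda_{n_1 n_2}<1$ with $\EE{V_{n_1 n_2}(\rho_{k+1}) \mid \rho_k} \leq \lambda_{n_1 n_2} V_{n_1 n_2}(\rho_k)$, so $\EE{V_{n_1 n_2}(\rho_k)} \leq \lambda_{n_1 n_2}^k V_{n_1 n_2}(\rho_0) \to 0$, and together with a.s.\ convergence this forces $V_{n_1 n_2}(\rho_k) \to 0$ a.s. Consequently $\pi_{n_1}^\infty \pi_{n_2}^\infty = 0$ a.s.\ for all $n_1\neq n_2$; combined with $\sum_n \pi_n^\infty=1$, exactly one $\pi_n^\infty$ equals $1$ and the others vanish.

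Finally I would upgrade convergence of the diagonal to convergence of the full density matrix by invoking positivity of $\rho_k$: for any off-diagonal entry one has $|\bra{n_1}\rho_k\ket{n_2}|^2 \leq \pi_{n_1}(\rho_k)\pi_{n_2}(\rho_k) \to 0$ a.s., so $\rho_k \to \ket{N_\infty}\bra{N_\infty}$ where $N_\infty$ is the a.s.\ unique index with $\pi_{N_\infty}^\infty=1$. The indicator $\mathbf{1}_{\{N_\infty=n\}}$ is precisely $\pi_n^\infty$, so by the martingale identity established in the first step, $\PP{N_\infty=n} = \EE{\pi_n^\infty} = \bra{n}\rho_0\ket{n}$.

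The only delicate point I anticipate is the strict inequality $\lambda_{n_1 n_2}<1$: one must be careful that equality in Cauchy--Schwarz is really ruled out by Assumption~\ref{assum:impt} (which is stated in terms of $|c_{\mu,n}|^2$, not $|c_{\mu,n}|$, but this is equivalent for nonnegative reals). Everything else is standard bounded-martingale / contracting-supermartingale machinery together with the PSD bound on off-diagonal entries.
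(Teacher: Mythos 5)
Your proof is correct, and it takes a genuinely different route from the paper's. The paper's argument is built on the single concave Lyapunov function $\Gamma$ of~\eqref{eq:V}, whose conditional decrease is the explicit nonnegative quantity $Q_1$ of~\eqref{eq:Q1}; one then invokes the Kushner-type convergence result (Theorem~\ref{thm:app}) to conclude that $\rho_k$ converges a.s.\ to the set $\{Q_1=0\}$, and Assumption~\ref{assum:impt} together with positivity identifies that set with the projectors $\ket{n}\bra{n}$. You instead attach to each pair $n_1\neq n_2$ the geometric mean $V_{n_1n_2}=\sqrt{\pi_{n_1}\pi_{n_2}}$ and observe that its conditional expectation is multiplied by the \emph{constant} factor $\sum_\mu |c_{\mu,n_1}|\,|c_{\mu,n_2}|$ (your cancellation of $p^0_{\mu,\rho_k}$ against the jump probabilities is right), which Cauchy--Schwarz plus Assumption~\ref{assum:impt} bounds strictly below $1$; your handling of the equality case ($|c_{\mu,n_1}|=|c_{\mu,n_2}|$ for all $\mu$, equivalent to equality of the squares) is also correct. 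What your route buys is an explicit exponential decay rate $\max_{n_1\neq n_2}\sum_\mu |c_{\mu,n_1}|\,|c_{\mu,n_2}|<1$ for ${\text{\normalsize$\mathbb E$}}\left(\sqrt{\pi_{n_1}\pi_{n_2}}\right)$ and complete independence from the invariance-principle machinery of the appendix; what the paper's route buys is a single Lyapunov function whose structure is recycled (as $V_\epsilon$ and $W_\epsilon$) in the closed-loop design of Section~\ref{sec:closedloop}, where the pairwise products would be less convenient. The endgame is the same in both: the martingale identity ${\text{\normalsize$\mathbb E$}}\left(\pi_n^\infty\right)=\bra{n}\rho_0\ket{n}$ for the limit probabilities, and the PSD bound $|\bra{n_1}\rho\ket{n_2}|^2\leq \pi_{n_1}\pi_{n_2}$ to upgrade convergence of the diagonal to convergence of the full density matrix.
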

This Theorem is already proved in~\cite{amini-et-al:ieee12,amini-et-al:cdc2011} and also in~\cite{Bauer2011} in a slightly different formulation. A direct proof is based on the following Lyapunov function:
\begin{equation}\label{eq:V}
\Gamma(\rho):=-\sum_{n=1}^{ d}\frac{\big(\bra{n}\rho\ket{n}\big)^2}{2}.
\end{equation}
Since  $\bra{n}\rho\ket{n}$ is a martingale for any $n$ and since
\begin{multline}\label{eq:Q1}
  \Gamma(\rho_k) -  \EE{\Gamma(\rho_{k+1})|\rho_k}=Q_1(\rho_k) :=
  \\ \sum_{n,\mu,\nu}\tfrac{p_{\mu,\rho_k}^0p_{\nu,\rho_k}^0}{4}
\left(\tfrac{|c_{\mu,n}|^2\bra{n}\rho_k\ket{n}}{p_{\mu,\rho_k}^0}-\tfrac{|c_{\nu,n}|^2\bra{n}\rho_k\ket{n}}{p_{\nu,\rho_k}^0}\right)^2
\end{multline}

\section{Feedback stabilization with perfect measurements}\label{sec:closedloop}
In  Subsection~\ref{ssec:overview},  we give an overview of the control method and then in Subsections~\ref{ssec:weights} and~\ref{ssec:stab}, we prove the main results. Finally in Subsection~\ref{ssec:Qfilter}, we prove a robustness principle that explains how we can ensure convergence even if the initial state $\rho_0$ is unknown.
\subsection{Overview of the control method}\label{ssec:overview}
Theorem~\ref{thm:first} shows that the open-loop dynamics are stable in the sense that in each realization  $\rho_k$ converges (non-deterministically) to one of the pure states $\ket{n}\bra{n}$ with probability $\bra{n}\rho_0\ket{n}$. The control goal is to make this convergence deterministic toward a chosen $\bar n\in\{1,\ldots,d\}$ playing the role of controller set point.  We build on the ideas in~\cite{dotsenko-et-al:PRA09,amini-et-al:ieee12,amini-et-al:cdc2011,somaraju-et-al:RevMathPh2012} to design a controller that is based on a strict Lyapunov function for the target state. In this paper we assume arbitrary controlled Kraus  operators $M^u_\mu$  that cannot be decomposed  into QND  measurement operators  $M^0_\mu$ followed by a unique controlled unitary operator $D_u$ with $D_0=\Id$ as assumed in~\cite{dotsenko-et-al:PRA09,amini-et-al:ieee12,amini-et-al:cdc2011,somaraju-et-al:RevMathPh2012} where $M^u_\mu \equiv D_u M_\mu^0$. It can be argued that the control we are proposing is non-Hamiltonian control~\cite{Romano2006,romano2006}, as the control parameter $u$ is not  necessarily  a parameter in the interaction Hamiltonian and could indeed be any parameter of an auxiliary system such as the measurement device.

To convey the main ideas involved in the control design, we begin with the case where there are no delays ($\tau=0$) and we assume the initial state is known. We wish to use the open-loop supermartingales to design a Lyapunov function for the closed-loop system. By an {open-loop supermartingale} we mean any function $V:\DD\to\RR$ such that
$\EE{V(\rho_{k+1}) |\rho_k=\rho, u_k = 0} \leq V(\rho)$
for all $\rho\in \DD$. The Lyapunov function underlying Theorem~\ref{thm:first} demonstrates how we can construct such open-loop supermartingales.
Now, at each time-step $k$, the feedback signal $u_k$ is chosen by minimizing this supermartingale $V$ knowing the state $\rho_k$:
$$u_k = \hat{u}(\rho) := \underset{u \in[-\bar u,\bar u]}{\text{argmin}} \bigg\{ \EE{V(\rho_{k+1}) |\rho_k = \rho, u_k = u}\bigg\}.$$
Here $\bar{u}$ is some small positive number that needs to be determined. Because $0\in [-\bar{u},\bar{u}]$ and the control $u_k$ is chosen to minimize $V$ at each step, we directly have that $V$ is a \emph{closed-loop supermartingale}, i.e.,
$$\tilde Q(\rho) := \EE{V(\rho_{k+1}) |\rho_k = \rho, u_k = \hat{u}(\rho)} - V(\rho) \leq 0$$
for all $\rho\in \DD$. If this supermartingale $V$ is bounded from below then we can directly apply the convergence Theorem~\ref{thm:app} in the appendix to prove that $\rho_k$ converges with probability one to the set
$I_\infty := \{\rho: \tilde Q(\rho) = 0\}.$

What remains to be done is to choose an  appropriate open-loop supermartingale $V$ so that the set $I_\infty$  is restricted to the target state $\{\ket{\bar{n}}\bra{\bar{n}}\}$. The Lyapunov function $\Gamma$ defined in~\eqref{eq:V} does not discriminate between the different basis vectors. We therefore use the Lyapunov function
$
V_\epsilon(\rho) = V_0(\rho) - \frac{\epsilon}{2}\sum_{n=1}^{ d} \bra{n}\rho\ket{n}^2
$
where
$V_0(\rho) = \sum_{n=1}^{ d} \sigma_n \bra{n}\rho\ket{n}$
and $\epsilon$ is a small positive constant. The weights $\sigma_n$ are strictly positive numbers except for $\sigma_{\bar n}=0$. This function $V_\epsilon$ is clearly a concave function of the open-loop martingales $\bra{n}\rho\ket{n}$ and  therefore is an open-loop supermartingale. Moreover, the weights $\sigma_n$ can be used to quantify the distance of the state $\rho$ from the target state $\ket{\bar n}\bra{\bar n}$ (c.f. Figure~\ref{fig:distance} below which shows how $\sigma_n$ are chosen for the experimental setting).

A state $\rho$ is in the set $I_\infty$ if and only if for all $u\in [-\bar{u},\bar{u}]$, we have
\begin{equation}\label{eqn:limSet}
\EE{V_\epsilon(\rho_{k+1}) |\rho_k = \rho, u_k = u} - V_\epsilon(\rho) \geq   0.
\end{equation}
Also from the fact that $V_\epsilon$ is an open-loop supermartingale, we have  for all $\rho\in \DD$
\begin{equation}\label{eqn:zeroFB}
\EE{V_\epsilon(\rho_{k+1}) |\rho_k = \rho, u_k = 0} - V_\epsilon(\rho) \leq 0.
\end{equation}
We prove in Lemma~\ref{lem:first} below that given any $\bar{n}\in \{1,\ldots,d\}$, we can always choose the weights $\sigma_1,\ldots,\sigma_d$ so that $V_\epsilon$ satisfies the following property:  $\forall n\in \{1,\ldots,d\}$,
$u \mapsto \EE{V_\epsilon(\rho_{k+1}) |\rho_k = \ket{n}\bra{n}, u_k = u}$
has a strict local minimum at $u=0$ for  $n = \bar{n}$ and strict local maxima at $u = 0$ for $n\neq \bar{n}$. This combined with Equation~\eqref{eqn:zeroFB} then ensures that for any  $n\neq \bar{n}$, there is some $u\in [-\bar{u},\bar{u}]$ such that
$
\EE{V_\epsilon(\rho_{k+1}) |\rho_k = \ket{n}\bra{n}, u_k = u} - V_\epsilon(\ket{n}\bra{n}) < 0.
$
Therefore using Equation~\eqref{eqn:limSet}, we know that $\ket{n}\bra{n}$ is in the limit set $I_\infty$ if and only if $n = \bar{n}$.

This idea can easily be extended to the situations where the delay $\tau$ is non zero. Take $(\rho_k,u_{k-1},\ldots,u_{k-\tau})$ as state at step $k$: denote by $\chi=(\rho,\beta_1,\ldots,\beta_\tau)$ this state where $\beta_r$ stands for the control input $u$ delayed $r$ steps. Then the state form of the delayed  dynamics~\eqref{eq:dynamic} is governed  by the following Markov chain
\begin{equation}\label{eq:chi}
  \left\{\begin{array}{l}
    \rho_{k+1}=\MM_{\mu_k}^{\beta_{\tau,k}}(\rho_k)
    \\
    \beta_{1,k+1}= u_k, \quad
   \beta_{r,k+1} = \beta_{r-1,k} \text{ for }r=2,\ldots \tau.
\end{array}\right.
\end{equation}
The goal is to design a feedback law $ u_k = f(\chi_k)$ that globally stabilizes this  Markov chain $\chi_k$ towards a chosen target state $\bar\chi=(\ket{\bar n}\bra{\bar n},0,\ldots,0)$ for some $\bar n \in\{1,\ldots, d\}$. In Theorem~\ref{thm:main}, we show how to design a feedback relying on the control Lyapunov function $W_\epsilon(\chi) = V_\epsilon(\KK^{\beta_1}(\KK^{\beta_2}(\ldots\ldots \KK^{\beta_\tau}(\rho) \ldots ) ))$. The idea is to use a predictive filter to estimate the state of the system $\tau$ time-steps later.

Finally, we address the situation where the initial state of the system is not fully known but only estimated by $\rhoe_0$. We show under some assumptions on the initial condition that, the feedback law based on the state $\rhoe_k$ of the miss-initialized filter still ensures the convergence of $\rhoe_k$ as well as the well-initialized conditional state $\rho_k$  towards $\ket{\bar{n}}\bra{\bar{n}}$. This demonstrates how the control algorithm is robust to uncertainties in the initialization of the estimated state of the quantum system.

\subsection{Choosing the weights $\sigma_n$}\label{ssec:weights}
The construction of the control Lyapunov function relies on two  lemmas.
\begin{lem}\label{lem:R}
Consider  the $d\times d$ matrix $R$  defined by
\begin{multline*}
R_{n_1,n_2}=\sum_\mu \Big(2\left|\bra{n_1}\tfrac{d {M_{\mu}^u}^\dag}{du}\big|_{u=0}\ket{n_2}\right|^2+\\2\delta_{n_1,n_2}\Re\big( c_{\mu,n_1}\bra{n_1}\tfrac{d^2 {M_{\mu}^u}^\dag}{du^2}\big|_{u=0}\ket{n_2}\big)\Big).
\end{multline*}
When $R\neq 0$, the non-negative   $P=\Id -R/\tr{R}$ is a right stochastic matrix.
\end{lem}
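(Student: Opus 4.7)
The lemma asserts two things about $P = \Id - R/\tr{R}$: that its entries are non-negative and that each of its rows sums to $1$. My plan is to verify these in turn, with the only analytic input being the normalization identity $\sum_\mu {M_\mu^u}^\dag M_\mu^u = \Id$.

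The sign of the off-diagonal entries is immediate from the definition: for $n_1 \neq n_2$ the Kronecker term drops out, so $R_{n_1,n_2} = 2 \sum_\mu \left|\bra{n_1} \frac{d {M_\mu^u}^\dag}{du}\big|_{u=0} \ket{n_2}\right|^2 \geq 0$. For the row sums, I would differentiate the normalization twice in $u$, evaluate at $u=0$, and sandwich between $\bra{n_1}$ and $\ket{n_1}$. Using $M_\mu^0 \ket{n_1} = c_{\mu,n_1} \ket{n_1}$ (and its conjugate on the left), the two outer terms combine into $\sum_\mu 2\Re\left( c_{\mu,n_1} \bra{n_1} \frac{d^2 {M_\mu^u}^\dag}{du^2}\big|_{u=0} \ket{n_1} \right)$, which matches the diagonal contribution in $R_{n_1,n_1}$. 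The middle cross term $\sum_\mu 2\, \bra{n_1} \frac{d {M_\mu^u}^\dag}{du}\big|_{u=0} \frac{d M_\mu^u}{du}\big|_{u=0} \ket{n_1}$ becomes, after inserting $\sum_{n_2} \ket{n_2}\bra{n_2} = \Id$, exactly $\sum_{\mu,n_2} 2 \left|\bra{n_1} \frac{d {M_\mu^u}^\dag}{du}\big|_{u=0} \ket{n_2}\right|^2$, i.e.\ the total of the first (squared-modulus) contributions to $R_{n_1,n_2}$ summed over $n_2$. Adding the two groups, the twice-differentiated normalization reads precisely $\sum_{n_2} R_{n_1,n_2} = 0$.

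The rest is elementary. Row sums zero force $R_{n,n} = -\sum_{n_2 \neq n} R_{n,n_2} \leq 0$, hence $\tr{R} \leq 0$; moreover, when $R \neq 0$, at least one off-diagonal entry must be strictly positive (otherwise every diagonal entry also vanishes and $R=0$), so $\tr{R} < 0$ strictly. The off-diagonals of $P$ are then $-R_{n_1,n_2}/\tr{R} \geq 0$, and the diagonals $1 - R_{n,n}/\tr{R}$ lie in $[0,1]$ because $R_{n,n} \geq \tr{R}$ (the remaining diagonal entries of $R$ being non-positive). Finally, the row sums of $P$ equal $1$ by construction since those of $R/\tr{R}$ are $0$.

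The only delicate step is the double differentiation of the normalization constraint and the bookkeeping that identifies the three resulting groups of terms with the three ingredients making up a row sum of $R$; once that computation is in hand, both non-negativity and the row-sum property for $P$ fall out of simple sign arguments.
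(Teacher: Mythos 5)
Your proof is correct and follows essentially the same route as the paper's: off-diagonal non-negativity is read off the definition, the row sums of $R$ are shown to vanish by differentiating $\sum_\mu {M_\mu^u}^\dag M_\mu^u=\Id$ twice at $u=0$ after inserting the resolution of identity, and the stochasticity of $P$ then follows from sign considerations. The only (harmless) addition is that you spell out explicitly why $\tr{R}<0$ when $R\neq 0$ and why the diagonal entries of $P$ lie in $[0,1]$, details the paper leaves implicit.
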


\begin{proof}
For $n_1\neq n_2$, $R_{n_1,n_2} \geq 0$. Thus $R$ is a Metzler matrix~\footnote{A Metzler matrix is a matrix such that all the off-diagonal components are non-negative.}. Let us prove that the sum of each row vanishes. This results from identity $\sum_\mu {M_\mu^u}^\dag M_\mu^u=\II$. Deriving twice versus $u$ the relation
\begin{multline*}
 \sum_{\mu,n_2} \bra{n_1}{M_\mu^u}^\dag \ket{n_2}\bra{n_2} M_\mu^u\ket{n_1}= \\
 \sum_\mu \bra{n_1}{M_\mu^u}^\dag M_\mu^u\ket{n_1}=1,
\end{multline*}
yields
\begin{multline*}
\sum_{\mu,n_2}
 2  \bra{n_1}\tfrac{d{M_\mu^u}^\dag}{du}\ket{n_2}\bra{n_2} \tfrac{dM_\mu^u}{du}\ket{n_1}
+\\   \bra{n_1}\tfrac{d^2{M_\mu^u}^\dag}{du^2} \ket{n_2}\bra{n_2} M_\mu^u\ket{n_1}
+\\   \bra{n_1}{M_\mu^u}^\dag \ket{n_2}\bra{n_2}  \tfrac{d^2M_\mu^u}{du^2}\ket{n_1}
=0
.
\end{multline*}
Since for $u=0$, $\bra{n_2} M_\mu^0\ket{n_1}= \delta_{n_1,n_2} c_{\mu,n_1}$ the above sum corresponds  to
$\sum_{n_2} R_{n_1,n_2}$. Therefore, the diagonal elements of $R$ are non-positive.  If $R\neq 0$, then $\tr{R} < 0$ and the matrix  $P=\Id-R/\tr{R}$ is well defined with non-negative entries.  Since the sum of each row of $R$ vanished, the sum of each row of $P$ is equal to $1$.  Thus $P$ is a right stochastic matrix.
\end{proof}
To the Metzler matrix $R$ defined in Lemma~\ref{lem:R}, we associate its directed graph denoted by $G.$ This graph admits $d$ vertices labeled by $n\in\{1,\ldots,d\}$. To each strictly
positive off-diagonal element of the matrix $R$, say, on the $n_1$'th row and the $n_2$'th
column we associate an edge from vertex $n_1$ towards vertex $n_2.$
\begin{lem}\label{lem:first}
Assume the  directed graph $G$ of the  matrix $R$ defined in  Lemma~\ref{lem:R}  is  strongly  connected, i.e., for any $n,n^\prime\in\{1,\ldots,d\}$, $n\neq n^\prime$, there exists a chain of $r$ distinct elements $(n_j)_{j=1,\ldots,r}$ of $\{1,\ldots,d\}$ such that $n_1=n$, $n_r=n^\prime$ and for any $j=1,\ldots r-1$, $R_{n_j,n_{j+1}}\neq 0$. Take $\bar n\in\{1, \ldots, d\}$.   Then, there exist $d-1$ strictly positive real numbers $e_n$, $n\in\{1,\ldots, d\}\setminus\{\bar n\}$, such that
\begin{itemize}
\item for any   reals $\lambda_n$, $n\in\{1,\ldots, d\}\setminus\{\bar n\}$, there exists a unique  vector $\sigma=(\sigma_n)_{n\in\{1,\ldots,d\}}$ of $\RR^d$ with $\sigma_{\bar n} =0$ such that
$R\sigma=\lambda$ where $\lambda$ is the vector of $\RR^d$ of components $\lambda_n$ for $n\in\{1,\ldots, d\}\setminus\{\bar n\}$ and $\lambda_{\bar n} = - \sum_{n\neq \bar n} e_n \lambda_n$;
if additionally  $\lambda_n <0$ for all $n\in\{1,\ldots, d\}\setminus\{\bar n\}$, then $\sigma_n > 0$ for all $n\in\{1,\ldots, d\}\setminus\{\bar n\}$.

\item for any  vector $\sigma\in\RR^d$, solution of $R\sigma=\lambda\in\RR^d$, the function $V_0(\rho)=\sum_{n=1}^d\sigma_n\bra{n}\rho\ket{n}$  satisfies
 $$
 \left.\frac{d^2 V_0\big(\KK^u(\ket{n}\bra{n})\big)}{d u^2}\right|_{u=0}= \lambda_n\qquad \forall n\in\{1,\ldots,d\}.
 $$
 \end{itemize}
\end{lem}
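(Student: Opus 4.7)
The plan is to exploit the Perron-Frobenius structure of the Metzler matrix $R$ (whose irreducibility follows from the strong connectivity of $G$) to analyze $R\sigma=\lambda$, and then to establish the second-derivative identity by a direct calculation.

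First I would set up the spectral analysis. By Lemma~\ref{lem:R}, $P=\Id-R/\tr R$ is a right stochastic matrix whose off-diagonal sparsity pattern matches that of $R$, so strong connectivity of $G$ makes $P$ irreducible. Perron--Frobenius then gives that $1$ is a simple eigenvalue of $P$: the right eigenspace is spanned by $(1,\ldots,1)^{T}$ (this is also $\ker R$) and the left eigenspace is spanned by a strictly positive row vector $\pi=(\pi_1,\ldots,\pi_d)$, which may be normalized so that $\sum_n \pi_n=1$. Thus $\pi^{T}R=0$ and the range of $R$ is exactly $\{\lambda\in\RR^d : \pi^{T}\lambda=0\}$. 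The compatibility condition $\pi^T\lambda=0$ rewrites as $\lambda_{\bar n}=-\sum_{n\neq\bar n}(\pi_n/\pi_{\bar n})\lambda_n$, which identifies the required coefficients as $e_n:=\pi_n/\pi_{\bar n}>0$. Given this condition, the solution $\sigma$ of $R\sigma=\lambda$ is determined up to additive multiples of $(1,\ldots,1)^{T}$, so the constraint $\sigma_{\bar n}=0$ fixes it uniquely.

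Next I would handle the positivity statement. Writing $R=\tr{R}(\Id-P)=-|\tr R|(\Id-P)$, the system $R\sigma=\lambda$ with $\sigma_{\bar n}=0$, restricted to the $d-1$ coordinates $n\neq\bar n$, becomes $(\Id-Q)\sigma'=-\lambda'/|\tr R|$, where $Q$ is the submatrix of $P$ obtained by deleting row and column $\bar n$, and $\sigma'$, $\lambda'$ denote the corresponding restrictions. Here I would invoke the standard fact for irreducible finite Markov chains: once $\bar n$ is removed, the substochastic matrix $Q$ has spectral radius strictly less than $1$, so $(\Id-Q)^{-1}=\sum_{k\geq 0}Q^{k}$ has non-negative entries with diagonal at least $1$. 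When $\lambda_n<0$ for all $n\neq\bar n$, the right-hand side $-\lambda'/|\tr R|$ is componentwise strictly positive, giving $\sigma_n\geq (-\lambda_n/|\tr R|)>0$ for every $n\neq\bar n$.

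For the second bullet I would compute directly. Setting $a_{\mu,n'}(u):=\bra{n'}M_\mu^u\ket{n}$, Assumption~\ref{assum:imp} gives $a_{\mu,n'}(0)=c_{\mu,n}\delta_{n,n'}$, and
\[
V_0\bigl(\KK^u(\ket n\bra n)\bigr)=\sum_{\mu,n'}\sigma_{n'}\,|a_{\mu,n'}(u)|^2.
\]
Applying $(|a|^2)''=2|a'|^2+2\Re(a''\bar a)$ at $u=0$ and noting that the $\Re(a''\bar a)$ contribution survives only for $n'=n$, I get
\[
\tfrac{d^2 V_0(\KK^u(\ket n\bra n))}{du^2}\big|_{u=0}
=\sum_{n',\mu}2\sigma_{n'}\bigl|\bra{n'}\tfrac{dM_\mu^u}{du}\big|_{u=0}\ket{n}\bigr|^2
+2\sigma_n\sum_\mu\Re\bigl(c_{\mu,n}^{\,*}\bra{n}\tfrac{d^2M_\mu^u}{du^2}\big|_{u=0}\ket{n}\bigr).
\]
Using the adjoint identities $\bra{n_1}\tfrac{d{M_\mu^u}^\dag}{du}\ket{n_2}=\overline{\bra{n_2}\tfrac{dM_\mu^u}{du}\ket{n_1}}$ (and similarly at second order), this reorganizes term-by-term into $\sum_{n'}R_{n,n'}\sigma_{n'}=(R\sigma)_n=\lambda_n$, completing the proof.

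The main obstacle will be the positivity step: one has to justify that restriction to $n\neq\bar n$ genuinely reduces the singular problem $R\sigma=\lambda$ to the invertible problem $(\Id-Q)\sigma'=-\lambda'/|\tr R|$, and that $Q$ has spectral radius $<1$ despite $Q$ itself not necessarily being irreducible when $P$ is. The Perron--Frobenius argument and the $C^2$-dependence in Assumption~\ref{assum:imo} are the only inputs beyond routine algebra.
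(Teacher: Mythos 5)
Your proposal is correct, and its skeleton coincides with the paper's: irreducibility of $P=\Id-R/\tr{R}$ from strong connectivity of $G$, Perron--Frobenius giving $\ker(R)=\mathrm{span}(1,\ldots,1)^{T}$ and $\mathrm{Im}(R)=e^{\perp}$ for a strictly positive left eigenvector (which yields the $e_n$ and the existence/uniqueness of $\sigma$ with $\sigma_{\bar n}=0$), followed by the same direct second-derivative computation of $\bra{n'}\KK^u(\ket{n}\bra{n})\ket{n'}$ identifying the Hessian with the rows of $R$. The only place where you genuinely diverge is the positivity step: you delete the row and column indexed by $\bar n$, invoke $\rho(Q)<1$ for the resulting substochastic block, and read off $\sigma_n\geq -\lambda_n/\abs{\tr{R}}>0$ from the non-negative Neumann series $(\Id-Q)^{-1}=\sum_k Q^k$; the paper instead applies a discrete minimum principle to $P\sigma=\sigma-\lambda/\tr{R}$ (if the minimum of $\sigma$ were attained at some $n_0\neq\bar n$, then $(P\sigma)_{n_0}\geq\sigma_{n_0}$ would contradict $(P\sigma)_{n_0}=\sigma_{n_0}-\lambda_{n_0}/\tr{R}<\sigma_{n_0}$), concluding $\min_{n\neq\bar n}\sigma_n>\sigma_{\bar n}=0$. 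Both are standard and valid; the paper's avoids the spectral-radius fact about principal submatrices of irreducible stochastic matrices that you flag as the main obstacle (that fact is true and standard, but the minimum-principle route sidesteps it entirely and even yields a slightly cleaner strict inequality), while your route gives the explicit quantitative lower bound $\sigma_n\geq -\lambda_n/\abs{\tr{R}}$ as a bonus.
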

\begin{proof}
Since the directed graph $G$ coincides with the directed graph of the right stochastic matrix $P$ defined in Lemma~\ref{lem:R}, $P$ is  irreducible. Since it is a right stochastic matrix, its spectral radius  is equal to $1$. By Perron-Frobenius theorem for non-negative irreducible matrices, this spectral radius, i.e., $1$, is also an  eigen-value of $P$ and  of $P^T$,  with  multiplicity one and  associated to  eigen-vectors having  strictly positive entries: the right eigen-vector ($P w= w$) is obviously $w=(1,\ldots,1)^T$; the left eigen-vector  $e=(e_1, \ldots, e_n)^T$ ($P^T e= e$) can be chosen such that $e_{\bar n} =1$.  Consequently, the rank of $R$ is $d-1$ with $\ker(R) = \text{span}( w)$ and $\text{Im}(R)= e ^{\perp}$ where $e ^{\perp}$ is the hyper-plane orthogonal to $e$. Since $ e^T \lambda = 0$, $\lambda\in \text{Im}(R)$,  exists $\sigma$ such that $R \sigma = \lambda$. Since $\ker(R) = \text{span}( w)$, there is a unique $\sigma$ solution of $R\sigma = \lambda$ such that $\sigma_{\bar n}=0$.
The fact that $\sigma_n > \sigma_{\bar n}$  when $\lambda_n<0$ for $n\neq \bar n$, comes from elementary manipulations of  $P\sigma = \sigma - \lambda /\tr{R}$ showing that $\min_{n\neq \bar n} \sigma_n > \sigma_{\bar n}$.

$\forall n$, set $g_n^u= V_0(\KK^u(\ket{n}\bra{n}))=\sum_{l}\sigma_l\bra{l}\KK^u(\ket{n}\bra{n})\ket{l}.$ Set $P_l^u:=\bra{l}\KK^u(\ket{n}\bra{n})\ket{l}.$ Then $
\tfrac{d P_l^u}{du}$ is given by $\sum_\mu\bra{l}\tfrac{d M_\mu^u}{du}\ket{n}\bra{n}{M_\mu^u}^\dag\ket{l}+\bra{l}M_\mu^u\ket{n}\bra{n}\tfrac{{d M_\mu^u}^\dag}{du}\ket{l}
$
and
\begin{align*}
\tfrac{d^2 P_l^u}{du^2}|_{u=0}&=\sum_\mu\Big(\bra{l}\tfrac{d^2 M_\mu^u}{du^2}\big|_{u=0}\ket{n}\bra{n}{M_\mu^0}^\dag\ket{l}
\\&+\bra{l}\tfrac{d M_\mu^u}{du}\big|_{u=0}\ket{n}\bra{n}\tfrac{d{M_\mu^u}^\dag}{du}\big|_{u=0}\ket{l}\\&+\bra{l}M_\mu^0\ket{n}\bra{n}\tfrac{{d^2 M_\mu^u}^\dag}{du^2}\big|_{u=0}\ket{l}\Big)\\
&=
\sum_\mu 2\Big(\left|\bra{n}\tfrac{d{M_\mu^u}^\dag}{du}\big|_{u=0}\ket{l}\right|^2\\&+2\delta_{nl}\Re \big(c_{\mu,n}\bra{n}\tfrac{{d^2 M_\mu^u}^\dag}{du^2}\big|_{u=0}\ket{l}\big)\Big).
\end{align*}
Therefore $
\tfrac{d^2 P_l^u}{du^2}|_{u=0}=R_{n,l}$ and $
\left.\tfrac{d^2 V_0\big( \KK^u(\ket{n}\bra{n})\big)}{d u^2}\right|_{u=0}=\sum_{l=1}^{ d} R_{n,l}\sigma_l=\lambda_n.$
\end{proof}
\subsection{The global stabilizing feedback}\label{ssec:stab}
The main result of this section is expressed through the following theorem.
\begin{theorem}\label{thm:main} Consider the Markov chain~\eqref{eq:chi} with Assumptions~\ref{assum:imp},~\ref{assum:impt} and~\ref{assum:imo}. Take $\bar n\in\{1,\ldots,d\}$ and assume that the directed graph $G$ associated to the Metzler  matrix $R$ of Lemma~\ref{lem:R} is strongly connected. Take $\epsilon >0$,   $\sigma\in\RR_+^d$ the solution of $R\sigma=\lambda$  with $\sigma_{\bar n}=0$,   $\lambda_n<0$ for $n\in\{1,\ldots,d\}\setminus\{\bar n\}$,   $\lambda_{\bar n} = -\sum_{n\neq \bar n} e_n \lambda_n$ (see Lemma~\ref{lem:first}) and
$
V_\epsilon(\rho)= \sum_{n=1}^d \sigma_n \bra n \rho \ket n - \tfrac{\epsilon}{2} (\bra n \rho \ket n)^2
.
$
Take $\bar u>0$ and consider the following feedback law
$$
u_k=\underset{\xi\in[-\bar u,\bar u]}{\text{argmin}}\big(\EE {W_\epsilon(\chi_{k+1})|\chi_k,u_k=\xi}\big)
:=f(\chi_k)
$$
where
$
W_\epsilon(\chi) = V_\epsilon(\KK^{\beta_1}(\KK^{\beta_2}(\ldots\ldots \KK^{\beta_\tau}(\rho) \ldots ) ))
.
$
Then there exists $u^*>0$ such that, for all $\bar u \in]0,u^*]$ and  $\epsilon\in\left]0,\min_{n\neq \bar n} \left(\frac{\lambda_{n}}{R_{n,n}}\right)\right]$,     the closed-loop Markov chain of state $\chi_{k}$ with the feedback law of Theorem~\ref{thm:main} converges almost surely towards $(\ket{\bar n} \bra{\bar n},0,\ldots,0)$  for any initial condition $\chi_0\in\DD \times[-\bar u,\bar u]^\tau$.
\end{theorem}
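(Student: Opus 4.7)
The plan is to show that $W_\epsilon$ is a strict closed-loop Lyapunov function and then invoke the supermartingale convergence theorem of the appendix. The first task is the open-loop supermartingale inequality $\EE{W_\epsilon(\chi_{k+1})|\chi_k, u_k=0}\le W_\epsilon(\chi_k)$. Two facts are combined: because $M_\mu^0$ is diagonal in $\{\ket n\}$ by Assumption~\ref{assum:imp}, the map $\KK^0$ preserves the populations $\bra n \rho \ket n$, so $V_\epsilon$ is $\KK^0$-invariant; and $V_\epsilon$ is the sum of an affine term in $\rho$ and a concave quadratic in the diagonal entries of $\rho$. Pushing the conditional expectation through the Kraus-map telescope defining $\tilde\rho(\chi)=\KK^{\beta_1}(\cdots\KK^{\beta_\tau}(\rho)\cdots)$ and applying Jensen's inequality to the innermost random map $\MM_{\mu_k}^{\beta_\tau}$ delivers the bound. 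The closed-loop version then follows because $f(\chi_k)$ minimizes $\xi\mapsto\EE{W_\epsilon(\chi_{k+1})|\chi_k,u_k=\xi}$ over $[-\bar u,\bar u]\ni 0$.

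Next I would invoke Theorem~\ref{thm:app} on the bounded supermartingale $W_\epsilon$, defined on the compact space $\DD\times[-\bar u,\bar u]^\tau$, to obtain almost-sure convergence of $\chi_k$ to the zero set $I_\infty=\{\chi:\tilde Q(\chi)=0\}$, where $\tilde Q(\chi)=\EE{W_\epsilon(\chi_{k+1})|\chi_k=\chi,u_k=f(\chi)}-W_\epsilon(\chi)$. The rest of the proof identifies $I_\infty$ with the single point $(\ket{\bar n}\bra{\bar n},0,\ldots,0)$.

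For $\chi\in I_\infty$, the admissibility of $\xi=0$ forces the supermartingale bound and hence Jensen's inequality to be an equality, so $\bra n \tilde\rho(\chi_{k+1})\ket n$ must be independent of the measurement outcome $\mu_k$ for every $n$. In the delay-free case this reduces to $|c_{\mu,n}|^2/p^0_{\mu,\rho}$ being constant in $\mu$ for every $n$ in the support of $\rho$, which by Assumption~\ref{assum:impt} forces the support to be a singleton and hence $\rho=\ket n\bra n$ for some $n$. For $\tau>0$, the same conclusion is reached by iterating the equality using forward invariance of $I_\infty$: the delay buffer $(\beta_1,\ldots,\beta_\tau)$ must unwind to zero because any persistent nonzero $\beta_r$ at the target would render the feedback strictly improving and so exit $I_\infty$; once the buffer is empty the $\tau=0$ argument applies.

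To rule out $n\neq\bar n$, combining a direct computation of the second derivative of the quadratic correction with Lemma~\ref{lem:first} yields $\tfrac{d^2}{du^2}V_\epsilon(\KK^u(\ket n\bra n))|_{u=0}<0$ for every $n\neq\bar n$ under $\epsilon\in\left]0,\min_{n\neq\bar n}(\lambda_n/R_{n,n})\right]$, and symmetrically a strictly positive second derivative at $n=\bar n$ for the same range of $\epsilon$. Hence there exists $u^\ast>0$ such that, for $\bar u\in\left]0,u^\ast\right]$ and any $n\neq\bar n$, one can find $\xi\in[-\bar u,\bar u]$ with $V_\epsilon(\KK^\xi(\ket n\bra n))<V_\epsilon(\ket n\bra n)$. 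Evaluated at a limit point $\chi=(\ket n\bra n,0,\ldots,0)$ this gives $\EE{W_\epsilon(\chi_{k+1})|\chi_k=\chi,u_k=\xi}=V_\epsilon(\KK^\xi(\ket n\bra n))<W_\epsilon(\chi)$, contradicting $\tilde Q(\chi)=0$ unless $n=\bar n$. The main technical obstacle is this delay step in the limit-set analysis: the Jensen equality only constrains the \emph{predicted} diagonal populations, so recovering the current pure-state structure of $\rho$ requires propagating the constraint backwards through $\tau$ Kraus maps, using the invariance of $I_\infty$ together with Assumption~\ref{assum:impt} to disentangle the measurement statistics of distinct basis states.
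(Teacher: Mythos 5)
Your overall strategy is the same as the paper's: decompose the conditional decrease of $W_\epsilon$ into a Jensen gap ($Q_1$, from concavity of $V_\epsilon$ in the open-loop martingales $\bra n\rho\ket n$) and a minimization gap ($Q_2$, from $0\in[-\bar u,\bar u]$), invoke the Kushner-type Theorem~\ref{thm:app} to localize the $\omega$-limit set inside $\{Q_1=Q_2=0\}$, and use the second derivatives $\left.\tfrac{d^2}{du^2}V_\epsilon\big(\KK^u(\ket n\bra n)\big)\right|_{u=0}=\lambda_n-\epsilon R_{n,n}$ supplied by Lemma~\ref{lem:first} to show that among the states $(\ket n\bra n,0,\ldots,0)$ only $n=\bar n$ can survive. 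Those parts are correct and match the paper.

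The genuine gap is in the identification of $I_\infty$. The set $\{Q_1=Q_2=0\}$ is \emph{not} a priori restricted to points of the form $(\ket n\bra n,0,\ldots,0)$: when the buffer entries $\beta_r$ are nonzero, $Q_1(\rho,\beta)=0$ only forces some population $\rho_{n,n}$ to be within $O(|\beta_1|+\cdots+|\beta_\tau|)$ of $1$ --- this is exactly the content of Lemma~\ref{lem:Q1}, which you never invoke --- yet your contradiction argument is evaluated only at the exact points $(\ket n\bra n,0,\ldots,0)$. The assertion that ``any persistent nonzero $\beta_r$ at the target would render the feedback strictly improving and so exit $I_\infty$'' is not a proof: $\{Q_1=Q_2=0\}$ need not be forward invariant under the stochastic dynamics, and nothing you wrote excludes limit points with $\beta\neq 0$ and $\rho$ close to but different from a basis state. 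The paper closes this in three steps you would need to reproduce: (i) the quantitative bound $\rho_{n,n}\geq 1-C|\beta_1|$ of Lemma~\ref{lem:Q1} on $\{Q_1=0\}$; (ii) a choice of $\bar u$ small enough that, by continuity of $d^2F_{\rho,\beta}/d\xi^2$ in $(\rho,\beta)$ (Assumption~\ref{assum:imo}), the strict concavity at $\xi=0$ extends to whole neighborhoods of $\ket n\bra n$ for $n\neq\bar n$, excluding those neighborhoods from $\{Q_2=0\}$; (iii) uniform strong convexity of $F$ in $\xi$ near the target, giving continuity of the argmin, hence $u_k\to 0$, hence $\beta_{1,k}\to 0$ along trajectories, and then $Q_1(\rho_k,0),Q_2(\rho_k,0)\to 0$, which finally forces $\rho_k\to\ket{\bar n}\bra{\bar n}$. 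This is also where the existence of $u^*$ in the statement actually comes from; you acknowledge the obstacle but do not supply the argument, so the proof is incomplete precisely at its hardest point.
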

\begin{proof}
For the sake of simplicity, first we demonstrate this Theorem for $\tau=1$ and thus for $\chi=(\rho,\beta_1)$. We then explain
how this proof may be extended to arbitrary $\tau > 1$. Here,
$
\EE{W_\epsilon(\chi_{k+1})|\chi_k,u_k}$ is given by $\sum_{\mu} p_{\mu,\rho_k}^{\beta_{1,k}}V_\epsilon(\KK^{u_k}(\MM_\mu^{\beta_{1,k}}(\rho_k)))
$ that can also be presented as $
\sum_{\mu} p_{\mu,\rho_k}^{\beta_{1,k}}V_\epsilon(\KK^{0}(\MM_\mu^{\beta_{1,k}}(\rho_k)))-Q_2(\chi_k)
$
with  $Q_2\geq 0$:
\begin{multline}\label{eq:Q2}
    Q_2(\chi_k)=
    \sum_{\mu} p_{\mu,\rho_k}^{\beta_{1,k}}V_\epsilon(\KK^{0}(\MM_\mu^{\beta_{1,k}}(\rho_k)))
    -\\
    \underset{\xi\in[-\bar u,\bar u]}{\text{min}}\sum_{\mu} p_{\mu,\rho_k}^{\beta_{1,k}}V_\epsilon(\KK^{\xi}(\MM_\mu^{\beta_{1,k}}(\rho_k)))
    .
\end{multline}
Since  $V_\epsilon(\KK^{0}(\rho))=V_{\epsilon}(\rho)$ for any $\rho\in\DD $, we have
\begin{equation}\label{eq:EW}
    \EE{W_\epsilon(\chi_{k+1})|\chi_k,u_k} = W_\epsilon(\chi_k) - Q_1(\chi_k) - Q_2(\chi_k)
\end{equation}
where $Q_1$ is defined in~\eqref{eq:Q1}.
Thus, $W_\epsilon(\chi_k)$ is a supermartingale. According to Theorem~\ref{thm:app}, the $\omega$-limit set $I_\infty$ of $\chi_k$ is included in
$$
\left\{(\rho,\beta_1)\in\DD \times[-\bar u,\bar u] ~|~ Q_1(\rho,\beta_1)=0,~Q_2(\rho,\beta_1)=0\right\}.
$$
We  show in 3 steps that $I_\infty$ is restricted to the target state. \\
{\bf Step 1.} For all $\delta > 0$, there exists $ \bar{u}>0$ such that
$I_\infty\subset \bigcup_{n=1}^d \bigg\{(\rho,\beta_1)\in\DD \times[-\bar u,\bar u] ~|~ \bket{n|\rho|n} \geq 1-\delta\bigg\}.$\\
{\bf Proof of step 1.} Step 1 follows from $Q_1(\chi) = 0$ for all $(\rho,\beta_1) \in I_\infty$ and technical Lemma~\ref{lem:Q1} of  the appendix.\\
{\bf Step 2.} There exist  $\delta' > 0$ and $\bar{u}> 0$ such that
$I_\infty\subset \{(\rho,\beta_1)\in\DD \times[-\bar u,\bar u] ~|~ \bket{\bar{n}|\rho|\bar{n}} \geq 1-\delta'\}.$\\
{\bf Proof of step 2.}
If $Q_2(\rho,\beta_1)=0$, the minimum of the function
$
[-\bar u,\bar u]\ni \xi \mapsto F_{\rho,\beta_1}(\xi)=\sum_{\mu} p_{\mu,\rho}^{\beta_1}V_\epsilon(\KK^{\xi}(\MM_\mu^{\beta_1}(\rho)))
$
is attained at $\xi=0$. By construction of $V_\epsilon$, we know that, for $\beta_1=0$,  $\rho=\ket{n}\bra{n}$,
$\left.\frac{d^2F_{\ket{n}\bra{n},0}(\xi)}{d\xi^2}\right|_{\xi=0} <0$
if $n\neq \bar n$, and $\epsilon\leq \min_{n\neq \bar n} \left(\frac{\lambda_{n}}{R_{n,n}}\right).$
The dependence of $d^2F_{\rho,\beta_1}/d\xi^2$ versus $\rho$ and $\beta_1$ is continuous
from Assumption~\ref{assum:imo}. Therefore,  $\exists \delta',\bar{u} >0$  such that for all $(\rho,\beta_1)$ in
$ \bigcup_{n\neq \bar{n}} \{(\rho,\beta_1)\in\DD \times[-\bar u,\bar u] ~|~ \bket{n|\rho|n} \geq 1-\delta'\}$
we have $\left.\frac{d^2F_{\rho,\beta_1}(\xi)}{d\xi^2}\right|_{\xi=0} <0.$ Therefore, such  $(\rho,\beta_1)$ and
$\xi=0$, cannot minimize $F_{\rho,\beta_1}(\xi)$ and we have
$
I_\infty\cap \bigcup_{n\neq \bar{n}} \{(\rho,\beta_1)\in\DD \times[-\bar u,\bar u] ~|~ \bket{n|\rho|n} \geq 1-\delta'\} = \emptyset.
$
Therefore  we get,
$$I_\infty\subset \{(\rho,\beta_1)\in\DD \times[-\bar u,\bar u] ~|~ \bket{\bar{n}|\rho|\bar{n}} \geq 1-\delta'\},$$
by setting $\delta = \delta'$ in Step 1, for $\bar{u}$ small enough. \\
{\bf Step 3.} $\bar{u}$ can be chosen small enough so that $I_\infty = \{(\ket{\bar{n}}\bra{\bar{n}},0)\}$.\\
{\bf Proof of step 3.}
By construction of $V_\epsilon$, we have for $\rho = \ket{\bar{n}}\bra{\bar{n}}$ and $\beta_1 = 0$,
$
\left.\frac{d^2 F_{\ket{\bar{n}}\bra{\bar{n}},0}(\xi)}{d\xi^2}\right|_{\xi=0} >0.
$
By continuity of $d^2F_{\rho,\beta_1}/d\xi^2$ with respect to $\rho$ and $\beta_1$, we can choose $\bar{\delta}$ and $\bar{u}$ small enough
such that for all $\beta_1\in [-\bar{u},\bar{u}]$ and $\rho$ satisfying $\bket{\bar{n}|\rho|\bar{n}} \geq 1-\bar{\delta}$ we have
$
\left.\frac{d^2 F_{\rho,\beta_1}(\xi)}{d\xi^2}\right|_{\xi=0} > 0.
$
This implies that on the domain
$\{(\rho,\beta_1,\xi)\in\DD \times[-\bar u,\bar u]^2 ~|~ \bket{n|\rho|n} \geq 1-\bar{\delta}\},$
$F$ is  a uniformly strongly convex function of $\xi\in[-\bar u,\bar u]$.
Thus the argument of its minimum over $\xi\in[-\bar u,\bar u]$  is a continuous function of  $\rho$ and $\beta_1$.
Now we choose $\delta'' = \min\{\bar{\delta}/2,\delta'/2\}$, where $\delta'$ is as in step 2. Take a convergent subsequence of $\{\chi_k\}_{k=1}^\infty$ (that we still denote by $\chi_k$ for simplicity sakes). Its  limit $(\rho_\infty,\beta_{1,\infty})$ belongs to  $I_\infty$ and also to $\{(\rho,\beta_1)\in
\DD \times[-\bar u,\bar u] ~|~ \bket{n|\rho|n} \geq 1-\bar{\delta}\}.$
Therefore,
$u_k = \underset{\xi\in[-\bar u,\bar u]}{\text{argmin}} F_{\rho_k,\beta_{1,k}}(\xi)$ converges to
$\underset{\xi\in[-\bar u,\bar u]}{\text{argmin}} F_{\rho_\infty,\beta_{1,\infty}}(\xi)=0$ since
$Q_2(\rho_\infty,\beta_{1,\infty}) = 0$.
Because $\beta_{1,k}=u_{k-1}$, $\beta_{1,k}$ tends almost surely towards $0$.
We know that  $Q_1(\rho_k,\beta_{1,k})$ and $Q_2(\rho_k,\beta_{1,k})$ tend  also almost surely to $0$ (Theorem~\ref{thm:app}) and by uniform (in $\rho$) continuity with respect to $\beta_{1,k}$,
$Q_1(\rho_k,0)$ and $Q_2(\rho_k,0)$  tend almost surely to $0$. Since $Q_2(\rho,0)=Q_1(\rho,0)=0$ implies
$\rho=\ket{\bar  n}\bra{\bar n}$, $\rho_k$ converges almost surely towards $\ket{\bar n}\bra{\bar n}$. This completes the
proof for $\tau = 1$. \\
{\bf Extension to $\tau > 1$.}
For $\tau >1$ and $\chi=(\rho,\beta_1,\ldots,\beta_\tau)$,  the proof is very similar. We still have~\eqref{eq:EW} with $Q_1$ and $Q_2$
given by formulae analogous to~\eqref{eq:Q1} and~\eqref{eq:Q2}:
$\beta_{1,k}$ is replaced by $\beta_{\tau,k}$;  $\MM_\mu^{\beta_{1,k}}(\rho_k)$ is replaced by
$\KK^{\beta_{1,k}}\ldots\KK^{\beta_{\tau-1,k}}(\MM_\mu^{\beta_{\tau,k}}(\rho_k))$;
 $p_{\mu,\rho_k}^{\beta_{\tau,k}}$
 is replaced by
 $
 \tr{\KK^{\beta_{1,k}}\ldots\KK^{\beta_{\tau-1,k}}(M_\mu^{\beta_{\tau,k}} \rho_k {M_\mu^{\beta_{\tau,k}}}^\dag)}
 $ since Kraus maps are trace preserving. The analogue of technical Lemma~\ref{lem:Q1}, whose proof relies on similar continuity and compactness arguments, has now the following statement:
{\em
$\exists C>0$ such that $\forall (\rho,\beta_1,\ldots,\beta_\tau)\in\DD \times [-\widetilde{u},\widetilde{u}]^\tau $ satisfying
$Q_1(\rho,\beta_1,\ldots,\beta_\tau)=0$,
$\exists n\in\{1,\ldots,d\}$ such that $\rho_{n,n} \geq 1-C (|\beta_1|+|\beta_2|+\ldots+|\beta_\tau|)$.
}\\
 The last part of the proof showing that, for $\bar u$ small enough, the control  $u_k$ is a continuous function of $\chi_k$ when $\chi_k$ is in the neighborhood of the $\omega$-limit set
 $\{\chi\in\DD \times[-\bar u,\bar u]^\tau~|~Q_1(\chi)=Q_2(\chi)=0\}$ remains almost the same.
\end{proof}
\subsection{Quantum filter and robustness property of filter} \label{ssec:Qfilter}
When the measurement process is fully efficient (i.e., the detectors are ideal with detection efficiency equal to one and they observe all the quantum jumps) and the jump model~\eqref{eq:dynamic} admits no error, the Markov process~\eqref{eq:chi} represents a natural choice for estimating  the hidden state  $\rho.$ Indeed, the estimate  $\rhoe$ of $\rho$    satisfies the following recursive dynamics
\begin{equation}\label{eq:filter}
\rhoe_{k+1}=\MM_{\mu_k}^{u_{k-\tau}}(\rhoe_k),
\end{equation}
where the measurement outcome $\mu_k$ is driven by~\eqref{eq:dynamic}. In practice,  the control $u_k$ defined in Theorem~\ref{thm:main} could only depend  on this estimation $\rhoe_k$ replacing $\rho_k$ in $\chi_k$.
If $\rhoe_0=\rho_0$, then $\rhoe_k\equiv \rho_k$ and Theorem~\ref{thm:main} ensures convergence towards the target state.
Otherwise, the following result guaranties the convergence of such observer/controller scheme when $\ker(\rhoe_0)\subset\ker(\rho_0)$.
\begin{theorem}\label{thm:obscontroller}
Consider the recursive Equation~\eqref{eq:dynamic} and assume that the assumptions of Theorem~\ref{thm:main} are satisfied. For each measurement outcome $\mu_k$  given by~\eqref{eq:dynamic}, consider the estimation $\rhoe_{k}$ given by~\eqref{eq:filter} with an initial condition $\rhoe_0$. Set
$u_k=f(\rhoe_k,u_{k-1},\ldots,u_{k-\tau})$ where $f$ is given in Theorem~\ref{thm:main}.
Then there exists $u^*>0$ such that, for all $\bar u \in]0,u^*]$ and  $\epsilon\in\left]0,\min_{n\neq \bar n} \left(\frac{\lambda_{n}}{R_{n,n}}\right)\right]$,
$\rho_k$ and $\rhoe_k$  converge almost surely towards the target state $\ket{\bar n}\bra{\bar n}$ as soon as  $\ker(\rhoe_0)\subset\ker(\rho_0)$.
\end{theorem}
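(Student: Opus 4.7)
The plan is to exploit the linearity of the unnormalized Kraus evolution together with the extremality of pure states in $\DD$. Since $\ker(\rhoe_0)\subset\ker(\rho_0)$, there exist $\alpha\in(0,1]$ and $\sigma_0\in\DD$ with $\rhoe_0 = \alpha\rho_0 + (1-\alpha)\sigma_0$. Let $\sigma_k$ denote the filter trajectory initialized at $\sigma_0$ and driven by the same observed outcomes $(\mu_j)$ and controls $(u_j)$ as $\rho_k$ and $\rhoe_k$. Propagating the decomposition through the linear unnormalized update $X\mapsto M_{\mu_k}^{u_{k-\tau}}X(M_{\mu_k}^{u_{k-\tau}})^{\dag}$ and then renormalizing yields, for every $k$,
\[
\rhoe_k = \lambda_k\rho_k + (1-\lambda_k)\sigma_k,\qquad \lambda_k\in(0,1],
\]
where $\lambda_k$ is the ratio of the unnormalized mass of $\alpha\rho_0$ to the total unnormalized mass of $\rhoe_0$. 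This convex decomposition is the backbone of everything that follows.

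Next I would set up a weighted Lyapunov function for the true dynamics. Define $Z_k:=\alpha/\lambda_k$: by construction $Z_k\geq\alpha>0$ deterministically, and $Z_k$ equals the likelihood ratio of the ``filter law'' $\mathbb P^{\mathrm{f}}(\mu_k=\mu\mid\FF_k)=p_{\mu,\rhoe_k}^{u_{k-\tau}}$ with respect to the true law $\mathbb P(\mu_k=\mu\mid\FF_k)=p_{\mu,\rho_k}^{u_{k-\tau}}$. A one-line check shows $Z_k$ is a non-negative $\mathbb P$-martingale starting at $Z_0=1$. Setting $\chi_k^{\mathrm{e}}:=(\rhoe_k,u_{k-1},\ldots,u_{k-\tau})$, a direct change-of-measure calculation then gives
\[
\mathbb E\bigl[Z_{k+1}W_\epsilon(\chi_{k+1}^{\mathrm{e}})\bigm|\FF_k\bigr] = Z_kW_\epsilon(\chi_k^{\mathrm{e}}) - Z_k\bigl(Q_1(\chi_k^{\mathrm{e}})+Q_2(\chi_k^{\mathrm{e}})\bigr),
\]
where $Q_1,Q_2\geq 0$ are the quantities produced in Theorem~\ref{thm:main}'s proof for the feedback $u_k=f(\chi_k^{\mathrm{e}})$. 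After shifting $W_\epsilon$ by an additive constant so it is non-negative, $Z_kW_\epsilon(\chi_k^{\mathrm{e}})$ becomes a non-negative $\mathbb P$-supermartingale.

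Theorem~\ref{thm:app} then yields $\sum_k Z_k(Q_1+Q_2)<\infty$ almost surely, and the deterministic bound $Z_k\geq\alpha$ forces $Q_1(\chi_k^{\mathrm{e}}),Q_2(\chi_k^{\mathrm{e}})\to 0$ a.s. The three-step $\omega$-limit analysis in the proof of Theorem~\ref{thm:main} then transfers verbatim to the filter, giving $\rhoe_k\to\ket{\bar n}\bra{\bar n}$ and $u_{k-1},\ldots,u_{k-\tau}\to 0$ almost surely. In parallel, Doob's martingale convergence theorem applied to the non-negative $\mathbb P$-martingale $Z_k$ produces an a.s.\ finite limit $Z_\infty\in[\alpha,\infty)$, and hence $\lambda_k\to\lambda_\infty=\alpha/Z_\infty\in(0,1]$ almost surely.

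The last step is pure convex geometry. Any subsequential limit $\rho_\infty$ of $\rho_k$ is attained along some $k_j$, and by compactness of $\DD$ a further extraction yields $\sigma_{k_j}\to\sigma_\infty\in\DD$; passing to the limit in the decomposition gives $\lambda_\infty\rho_\infty + (1-\lambda_\infty)\sigma_\infty = \ket{\bar n}\bra{\bar n}$. Since $\ket{\bar n}\bra{\bar n}$ is an extreme point of the convex set $\DD$ while $\lambda_\infty>0$, this forces $\rho_\infty=\ket{\bar n}\bra{\bar n}$; every accumulation point of $\rho_k$ therefore equals the target, so $\rho_k\to\ket{\bar n}\bra{\bar n}$ almost surely. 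The main obstacle I anticipate lies in the second paragraph: the natural candidate $W_\epsilon(\chi_k^{\mathrm{e}})$ is \emph{not} a $\mathbb P$-supermartingale, because the jumps of $\rhoe_k$ are driven by the true probabilities $p_{\mu,\rho_k}^{u_{k-\tau}}$ rather than its own, and recognizing that multiplying by $Z_k$ precisely absorbs this mismatch --- while the deterministic lower bound $Z_k\geq\alpha$ later salvages the strict decrease --- is the key non-trivial insight.
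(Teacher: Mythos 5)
Your proof is correct, but it takes a genuinely different route from the paper's. The paper keeps the decomposition $\rhoe_0=\gamma\rho_0+(1-\gamma)\rho_0^c$ at the level of \emph{initial conditions only}: since the feedback is a function of $(\rhoe_0,\mu_0,\mu_1,\ldots)$ alone, the map $\rho_0\mapsto\EE{\tr{\rho_k\bar\rho}~|~\rho_0,\rhoe_0}$ is affine; Theorem~\ref{thm:main} applied to the matched pair $(\rhoe_0,\rhoe_0)$ gives $\EE{\tr{\rho_k\bar\rho}~|~\rhoe_0,\rhoe_0}\to 1$, and since both terms of the affine combination are bounded by $1$, each must tend to $1$; the same linearity trick is then repeated for $\rhoe_k$. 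You instead propagate the decomposition \emph{pathwise} ($\rhoe_k=\lambda_k\rho_k+(1-\lambda_k)\sigma_k$), identify $Z_k=\alpha/\lambda_k$ as the likelihood-ratio martingale between the filter law and the true law, and rerun the Lyapunov argument of Theorem~\ref{thm:main} under the true measure by weighting $W_\epsilon$ with $Z_k$; the uniform bound $Z_k\geq\alpha$ recovers $Q_1,Q_2\to 0$ almost surely, and extremality of the pure target in $\DD$ converts convergence of $\rhoe_k$ into convergence of $\rho_k$. Both arguments rest on the same two pillars (the support condition yielding a convex decomposition, and purity of the target yielding an extremality argument), but yours is closer to the classical change-of-measure/filter-stability viewpoint and buys a little more: it delivers almost-sure convergence of the true state $\rho_k$ directly from the pathwise identity, whereas the paper passes through $\EE{\tr{\rho_k\bar\rho}~|~\rho_0,\rhoe_0}\to 1$ and leaves the step from convergence of expectations to almost-sure convergence implicit. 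Two presentational points, neither of which is a gap: the summability $\sum_k Z_k\bigl(Q_1(\chi_k^{\mathrm{e}})+Q_2(\chi_k^{\mathrm{e}})\bigr)<\infty$ a.s.\ does not follow from Theorem~\ref{thm:app} as stated (that theorem assumes a Markov chain on a \emph{compact} state space, while $Z_k$ is unbounded); it follows from the elementary supermartingale bound $\EE{\sum_{k\leq N}Z_k\bigl(Q_1(\chi_k^{\mathrm{e}})+Q_2(\chi_k^{\mathrm{e}})\bigr)}\leq \EE{S_0}$ for the shifted non-negative supermartingale $S_k=Z_k\bigl(W_\epsilon(\chi_k^{\mathrm{e}})+c\bigr)$ together with monotone convergence. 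Likewise, the lower bound $Z_\infty\geq\alpha$ comes from the pointwise inequality $Z_k\geq\alpha$, not from Doob's theorem itself, which only supplies finiteness of the limit.
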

\begin{proof} Set $\bar\rho=\ket{\bar n}\bra{\bar n}$.
The main idea is that $\EE{\tr{\rho_k\bar\rho}|\rho_0,\rhoe_0}$ (where we take the expectation over all jump realizations) depends linearly on $\rho_0$ even though we apply the feedback control.
Since
$u_{k-\tau}$ is a  function of $(\rhoe_0,\mu_0,\ldots,\mu_{k-\tau-1})$,  $
 \EE{\tr{\rho_k\bar\rho}|\rho_0,\rhoe_0}$ is given by $\sum_{\mu_0,\ldots,\mu_{k-1}}\tr{\widetilde{M}_{\mu_{k-1}}^{u_{k-\tau}}\big(\ldots
 \big(\widetilde{M}_{\mu_0}^{u_{-\tau}}(\rho_0)\big)\big)\bar\rho}$,
where $\widetilde{M}_\mu^u(\rho)=M_\mu^u\rho{M_\mu^u}^\dag$.
The linearity of
$\EE{\tr{\rho_k\bar\rho}|\rho_0,\rhoe_0}$ with respect to $\rho_0$ is thus verified.  Now we apply the assumption $\ker(\rhoe_0)\subset\ker(\rho_0)$ and therefore one can find a constant $\gamma\in]0,1[$ and a well-defined density matrix $\rho_0^c$ in $\DD ,$ such that
$\rhoe_0=\gamma\rho_0+(1-\gamma)\rho_0^c.$
Applying the dominated convergence theorem:
$
\lim_{k\rightarrow\infty} \EE{\tr{\rho_k \bar\rho}~|~\rhoe_0,\rhoe_0}=1.
$
By the linearity of $ \EE{\tr{\rho_k \bar\rho}~|~\rho_0,\rhoe_0}$  with respect to $\rho_0$, $\EE{\tr{\rho_k \bar\rho}~|~\rhoe_0,\rhoe_0}$ is given by $\gamma \EE{\tr{\rho_k \bar\rho}~|~\rho_0,\rhoe_0}+(1-\gamma)\EE{\tr{\rho_k \bar\rho}~|~\rho_0^c,\rhoe_0}.$
As $ \EE{\tr{\rho_k \bar\rho}~|~\rho_0,\rhoe_0}\leq 1 $ and $ \EE{\tr{\rho_k \bar\rho}~|~\rho^c_0,\rhoe_0}\leq 1$, we necessarily have that both of them converge to $1$ since $0<\gamma<1$:
$
\lim_{k\rightarrow\infty} \EE{\tr{\rho_k \bar\rho}~|~\rho_0,\rhoe_0}=1.
$
This implies the almost sure convergence of $\rho_k$ towards the pure state $\bar\rho$.
We have also  $\EE{\tr{\rhoe_k\bar\rho}|\rho_0,\rhoe_0}$ depending  linearly  on  $\rho_0$. Thus
$\EE{\tr{\rhoe_k \bar\rho}~|~\rhoe_0,\rhoe_0}$ is given by $\gamma \EE{\tr{\rhoe_k \bar\rho}~|~\rho_0,\rhoe_0}+(1-\gamma)\EE{\tr{\rhoe_k \bar\rho}~|~\rho_0^c,\rhoe_0}.$
Using the fact that when $\rho_0=\rhoe_0$, $\rho_k=\rhoe_k$ for all $k$, we conclude similarly that $\rhoe_k$ converges almost surely towards $\bar\rho$ even if $\rho_0$ and $\rhoe_0$ do not coincide.
\end{proof}
\section{Imperfect Measurements} \label{sec:imperfect}
We now consider the feedback control problem in the presence of classical measurement imperfections with the possibility of detection errors. This model is a direct generalization of the ones used in~\cite{Gardiner,dotsenko-et-al:PRA09,sayrin-et-al:nature2011} (see also e.g.,~\cite{bouten-handel:2007,bouten-handel:2009} for an introduction to quantum filtering).
The imperfections in the measurement process are  described by a classical probabilistic model relying on a  left stochastic matrix
$(\eta_{\mur,\mu})$,  $\mur\in\{1,\ldots,\mr\}$ and $\mu\in\{1,\ldots,m\}$: $\eta_{\mur,\mu} \geq 0$
 and for any $\mu$, $\sum_{\mur=1}^{\mr} \eta_{\mur,\mu} =1$. The integer $\mr$ corresponds to the number of imperfect outcomes
and $\eta_{\mur,\mu}$ is the probability of having the imperfect outcome  $\mur$ knowing the perfect  one $\mu$.
Set
\begin{equation}\label{eq:filt}
\hrho_{k} = \EE{\rho_{k}|\rho_0,\mur_0,\ldots,\mur_{k-1},u_{-\tau}, \ldots, u_{k-\tau-1}}.
\end{equation}
Since $\rho_k$ follows~\eqref{eq:dynamic}, $\hrho_k$  is also governed by a  recursive equation~\cite{somaraju-et-al:acc2012}:
\begin{equation}\label{eq:hatdynamic}
 \hrho_{k+1}=\LL_{\mur_k}^{u_{k-\tau}}(\hrho_k),
\end{equation}
where for each $\mur$, $\LL_{\mur}^u$ is the super-operator defined by
 $
 \hrho \mapsto \LL_{\mur}^{u}(\hrho) = \frac{\bL_{\mur}^{u}(\hrho)}{\tr{\bL_{\mur}^{u}(\hrho)}}
 $
 with $\bL_{\mur}^{u}(\hrho) = \sum_{\mu=1}^{m} \eta_{\mur,\mu} M_{\mu}^{u} \hrho {M_{\mu}^{u}}^\dag$, and where
$\mur_k$ is a random variable taking values $\mur$ in $\{1,\ldots,\mr\}$  with probability
 $
    p_{\mur,\hrho_k}^{u_{k-\tau}}= \tr{\bL_{\mur}^{u_{k-\tau}}(\hrho_k)}
    .
$
Since $\eta_{\mur,\mu}$ is a left stochastic matrix,
$
\EE{\hrho_{k+1}|\hrho_k = \rho,u_{k-\tau} = u} = \KK^u(\hrho),
$
which is precisely the Kraus map~\eqref{eq:kraus} associated with the Markov process $\rho_k$. By Assumption~\ref{assum:imp}, each pure
state $\ket{n}\bra{n}$, $n\in\{1,\ldots,d\}$ remains a fixed point of the Markov process~\eqref{eq:hatdynamic} when $u\equiv 0$.

We now consider the dynamics of the filter state $\hrho$ in the presence  of delays in the feedback control.
Similar to the case with perfect measurements, let $\hchi_k = (\hrho_k,\beta_{1,k},\ldots,\beta_{\tau,k})$ be the filter state at step $k$, where
$\beta_{l,k}= u_{k-l}$ is the feedback control at time-step $k$ delayed $l$ steps.
Then the delay dynamics are determined by the following Markov chain
\begin{equation}\label{eq:chiMeas}
  \left\{\begin{array}{l}
    \hrho_{k+1}=\LL_{\mur_k}^{\beta_{\tau,k}}(\hrho_k)
    \\
    \beta_{1,k+1}= u_k, \quad
   \beta_{r,k+1} = \beta_{r-1,k} \text{ for }r=2,\ldots \tau.
\end{array}\right.
\end{equation}
Instead of Assumption~\ref{assum:impt} we now assume
\begin{assum}~\label{assum:hatimpt}
 For all $n_1\neq n_2$ in $\{1,\ldots, d\},$ there exists $\mur\in\{1,\ldots,\mr\}$ such that
 $
 \sum_{\mu=1}^{m} \eta_{\mur,\mu} |c_{\mu,n_1}|^2 \neq \sum_{\mu=1}^{m} \eta_{\mur,\mu}  |c_{\mu,n_2}|^2.
 $
\end{assum}
Assumption~\ref{assum:hatimpt} means that there exists a $\mur$ such that the statistics  when  $u\equiv 0$  for
obtaining the measurement result $\mur$ are different for the fixed points $\ket{n_1}\bra{n_1}$  and $\ket{n_2}\bra{n_2}$.
This follows by noting that $\tr{\bL_{\mur}^0(\ket{n}\bra{n})}=\sum_{\mu=1}^{m} \eta_{\mur,\mu}|c_{\mu,n}|^2$ for $n\in\{1,\ldots, d\}$.

We now state  the analogue of Theorem~\ref{thm:main} in the case of imperfect measurements.
\begin{theorem}\label{thm:mainMeas} Consider the Markov chain~\eqref{eq:chiMeas} with Assumptions~\ref{assum:imp}, \ref{assum:imo} and~\ref{assum:hatimpt}.
Take $\bar n\in\{1,\ldots,d\}$. Assume that the directed graph of Metzler matrix $R$ of
Lemma~\ref{lem:R} is  strongly connected. Take $\epsilon >0$,   $\sigma\in\RR_+^d$ solution of $R\sigma=\lambda$ with $\sigma_{\bar n}=0$, $\lambda_n<0$ for $n\in\{1,\ldots,d\}\setminus\{\bar n\}$,
$\lambda_{\bar n} = -\sum_{n\neq \bar n} e_n \lambda_n$ (see Lemma~\ref{lem:first}) and  set
$ V_\epsilon(\hrho)= \sum_{n=1}^d \sigma_n \bra n \widehat\rho \ket n - \tfrac{\epsilon}{2} (\bra n \hrho \ket n)^2.
$
Take $\bar u>0$ and consider the following feedback law
$
u_k=\underset{\xi\in[-\bar u,\bar u]}{\text{argmin}}\big(\EE {W_\epsilon({\hchi}_{k+1})|{\hchi}_k,u_k=\xi}\big)
=f({\hchi}_k),
$
where
$
W_\epsilon({\hchi}) = V_\epsilon(\KK^{\beta_1}(\KK^{\beta_2}(\ldots\ldots \KK^{\beta_\tau}(\hrho) \ldots ) ))
.
$\\
Then there exists $u^*>0$ such that, for all $\bar u \in]0,u^*]$ and  $\epsilon\in\left]0,\min_{n\neq \bar n} \left(\frac{\lambda_{n}}{R_{n,n}}\right)\right]$,
the closed-loop Markov chain of state ${\hchi}_{k}$ with the feedback law of Theorem~\ref{thm:mainMeas} converges almost surely towards
$(\ket{\bar n} \bra{\bar n},0,\ldots,0)$  for any initial condition ${\hchi}_0\in\DD \times[-\bar u,\bar u]^\tau$.
\end{theorem}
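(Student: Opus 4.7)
The plan is to mirror the three-step argument of Theorem~\ref{thm:main}, exploiting the key observation noted after~\eqref{eq:hatdynamic}: the filter's one-step conditional expectation still reduces to the Kraus map, $\EE{\hrho_{k+1}|\hrho_k, u_{k-\tau}=u} = \KK^u(\hrho_k)$. This identity is all that is required to transplant the whole Lyapunov construction to the imperfect-detection setting. Replacing $\rho_k$ by $\hrho_k$ and $\MM_{\mu_k}^{\cdot}$ by $\LL_{\mur_k}^{\cdot}$, I expect the decomposition $\EE{W_\epsilon(\hchi_{k+1})|\hchi_k, u_k} = W_\epsilon(\hchi_k) - Q_1(\hchi_k) - Q_2(\hchi_k)$ with $Q_1, Q_2 \geq 0$ to persist. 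Once this holds, $W_\epsilon$ is a supermartingale and Theorem~\ref{thm:app} reduces convergence to identifying the zero set $I_\infty := \{Q_1 = Q_2 = 0\}$.

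To check the open-loop supermartingale property at $u=0$, observe that the linear part $V_0$ is already a martingale because $\bra{n}\KK^0(\hrho)\ket{n} = \bra{n}\hrho\ket{n}$ under Assumption~\ref{assum:imp}. For the quadratic correction, set $a_{\mur,n} := \bra{n}\bL_{\mur}^{0}(\hrho)\ket{n}$ and $p_{\mur} := \tr{\bL_{\mur}^{0}(\hrho)}$; Cauchy--Schwarz yields $\sum_{\mur} a_{\mur,n}^2/p_{\mur} \geq \bra{n}\hrho\ket{n}^2$, with equality iff $\mur \mapsto a_{\mur,n}/p_{\mur}$ is constant. The slack supplies the non-negative $Q_1(\hchi)$ that is the imperfect-detection analogue of~\eqref{eq:Q1}, while the minimization defining $u_k$ contributes $Q_2(\hchi) \geq 0$ exactly as in~\eqref{eq:Q2}.

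The main obstacle is an analogue of Lemma~\ref{lem:Q1} for imperfect detection: I need a constant $C > 0$ such that every $\hchi = (\hrho, \beta_1, \ldots, \beta_\tau) \in \DD \times [-\tilde u, \tilde u]^\tau$ with $Q_1(\hchi) = 0$ admits some $n$ satisfying $\bra{n}\hrho\ket{n} \geq 1 - C(|\beta_1| + \cdots + |\beta_\tau|)$. The pointwise case $\beta \equiv 0$ reduces, via the Cauchy--Schwarz equality condition above, to requiring that the ratios $\eta_{\mur}^{(n)}\bra{n}\hrho\ket{n}/\sum_{n'} \eta_{\mur}^{(n')}\bra{n'}\hrho\ket{n'}$, with $\eta_{\mur}^{(n)} := \sum_\mu \eta_{\mur,\mu}|c_{\mu,n}|^2$, be independent of $\mur$ for every $n$. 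Assumption~\ref{assum:hatimpt} is exactly the condition that the vectors $(\eta_{\mur}^{(n)})_{\mur}$ discriminate distinct indices $n$, so it forces $\hrho$ to be supported on a single $\ket{n}\bra{n}$; a compactness/continuity argument then extends the conclusion to the $C$-tube around $\beta \equiv 0$.

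With that lemma in hand, the remaining two steps of the proof of Theorem~\ref{thm:main} transport essentially verbatim. Step~2 invokes Lemma~\ref{lem:first}: at $\hrho = \ket{n}\bra{n}$ with $n \neq \bar n$ and $\beta \equiv 0$, the objective $F_{\hchi}(\xi) := \EE{W_\epsilon(\hchi_{k+1})|\hchi_k = \hchi, u_k = \xi}$ has $\partial_\xi^2 F|_{\xi=0}$ strictly negative for $\epsilon$ in the admissible range, so $\xi = 0$ cannot be the minimizer and $Q_2(\hchi) > 0$, contradicting membership in $I_\infty$. Step~3 uses strong convexity of $F_{\hchi}$ near the target, together with continuous dependence of its argmin on $\hchi$, to deduce $u_k \to 0$ along any subsequence approaching a limit point in $I_\infty$, hence $\beta_{r,k} \to 0$ for every $r$ and finally $\hrho_k \to \ket{\bar n}\bra{\bar n}$ almost surely. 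The extension from $\tau = 1$ to arbitrary $\tau$ is the notational substitution described at the end of the proof of Theorem~\ref{thm:main}, with $\bL_{\mur}^{u}$ replacing the perfect-measurement Kraus slice and the trace-preservation of $\KK^u$ ensuring that the probabilities $p_{\mur,\hrho_k}^{\beta_{\tau,k}}$ appear correctly inside the conditional expectation.
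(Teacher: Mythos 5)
Your proposal is correct and follows exactly the route the paper intends: the paper's own ``proof'' of Theorem~\ref{thm:mainMeas} is a one-line remark that the argument of Theorem~\ref{thm:main} carries over with $\MM^u_\mu\to\LL^u_{\mur}$ and $p^u_{\mu,\rho}\to p^u_{\mur,\hrho}$, and you have supplied precisely those details — the preserved supermartingale decomposition via $\EE{\hrho_{k+1}|\hrho_k,u}=\KK^u(\hrho_k)$, the Cauchy--Schwarz slack giving $Q_1\geq 0$, and the substitution of Assumption~\ref{assum:hatimpt} for Assumption~\ref{assum:impt} in the analogue of Lemma~\ref{lem:Q1}. No discrepancy with the paper's approach.
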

The proof of this theorem is almost identical to that of Theorem~\ref{thm:main} with $\MM^u_\mu$ replaced by $\LL^u_{\mur}$ and $p^u_{\mu,\rho}$ replaced by $p^u_{\mur,\hrho}$
and we do not give the details of this proof.\\
For estimating  the hidden state  $\hrho$ needed for the feedback design of Theorem~\ref{thm:mainMeas}, let us consider  the estimate  $\hrhoe$ given by
\begin{equation}\label{eq:filterMeas}
\hrhoe_{k+1}=\LL_{\mur_k}^{u_{k-\tau}}(\hrhoe_k)
\end{equation}
where $\mur_k$  corresponds to the imperfect  outcome detected at step $k$. Such $\mur_k$ is correlated  to    the perfect  and hidden outcome $\mu_k$ of~\eqref{eq:dynamic}  through   the  classical stochastic process attached to~$(\eta_{\mur,\mu})$:  for each $\mu_k$,
$\mur_k$ is a random variable  to be equal to $\mur \in\{1,\ldots,\mr\}$ with probability $\eta_{\mur,\mu_k}$. In practice,  the control $u_k$ defined in Theorem~\ref{thm:mainMeas} could only depend  on this estimation $\hrhoe_k$ replacing $\hrho_k$ in $\hchi_k=(\hrho_k,u_{k-1}, \ldots,u_{k-\tau})$. The following result guaranties the convergence of the feedback scheme  when $\ker(\hrhoe_0)\subset\ker(\rho_0)$.
\begin{theorem}\label{thm:obscontrollerMeas}
Consider the recursive Equation~\eqref{eq:dynamic} and take assumptions of Theorem~\ref{thm:mainMeas}. Consider the estimation $\hrhoe_{k}$ given by~\eqref{eq:filterMeas} with an initial condition $\hrhoe_0$. Set $u_k=f(\hrhoe_k,u_{k-1},\ldots,u_{k-\tau})$ where $f$ is given by Theorem~\ref{thm:mainMeas}.
Then there exists $u^*>0$ such that, for all $\bar u \in]0,u^*]$ and  $\epsilon\in\left]0,\min_{n\neq \bar n} \left(\frac{\lambda_{n}}{R_{n,n}}\right)\right]$,  $\rho_k$ and $\hrhoe_k$ converge almost surely towards the target state $\ket{\bar n}\bra{\bar n}$  as soon as  $\ker(\hrhoe_0)\subset\ker(\rho_0)$.
\end{theorem}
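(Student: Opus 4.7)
The plan is to mirror the robustness argument of Theorem~\ref{thm:obscontroller}, absorbing the extra layer of randomness from the classical detection errors $(\eta_{\mur,\mu})$ into the conditioning. Throughout I write $\bar\rho=\ket{\bar n}\bra{\bar n}$ and exploit that $\bar\rho$ is an extreme point of the convex set $\DD$, which is what makes the convex decomposition below productive.

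The first step is to verify that, for every $k\geq 0$,
\[
\rho_0\;\longmapsto\;\EE{\tr{\rho_k\bar\rho}\mid\rho_0,\hrhoe_0}
\quad\text{and}\quad
\rho_0\;\longmapsto\;\EE{\tr{\hrhoe_k\bar\rho}\mid\rho_0,\hrhoe_0}
\]
are affine. The decisive observation is that the control $u_{j-\tau}=f(\hrhoe_{j-\tau},u_{j-\tau-1},\ldots,u_{j-2\tau})$ is a measurable function of $\hrhoe_0$ and $(\mur_0,\ldots,\mur_{j-\tau-1})$ only, hence carries no dependence on $\rho_0$ nor on the hidden $\mu_j$'s. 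Setting $\widetilde M^u_\mu(\rho)=M^u_\mu\rho {M^u_\mu}^\dag$ and summing over the joint trajectories $\big((\mu_0,\mur_0),\ldots,(\mu_{k-1},\mur_{k-1})\big)$ with weight $\prod_{j=0}^{k-1}\eta_{\mur_j,\mu_j}$, the $\rho_0$-dependence of each term lives entirely in the Kraus product $\widetilde M^{u_{k-\tau-1}}_{\mu_{k-1}}\circ\cdots\circ\widetilde M^{u_{-\tau}}_{\mu_0}(\rho_0)$, which is linear in $\rho_0$. The identical accounting works for $\hrhoe_k$, since $\hrhoe_k$ is a deterministic function of $(\hrhoe_0,\mur_0,\ldots,\mur_{k-1})$ and the marginal probability of the $\mur$-record is again linear in $\rho_0$.

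Next I would treat the correctly initialized case $\hrhoe_0=\rho_0$: then~\eqref{eq:filterMeas} and~\eqref{eq:hatdynamic} coincide, so $\hrhoe_k=\hrho_k$ for all $k$, and Theorem~\ref{thm:mainMeas} gives $\hrho_k\to\bar\rho$ almost surely. Using the tower property $\EE{\tr{\rho_k\bar\rho}\mid\mur_0,\ldots,\mur_{k-1}}=\tr{\hrho_k\bar\rho}$ together with the bound $\tr{\hrho_k\bar\rho}\leq 1$, dominated convergence yields $\EE{\tr{\rho_k\bar\rho}\mid\hrhoe_0,\hrhoe_0}\to 1$. The kernel hypothesis $\ker(\hrhoe_0)\subset\ker(\rho_0)$ now produces $\gamma\in(0,1)$ and $\rho_0^c\in\DD$ with $\hrhoe_0=\gamma\rho_0+(1-\gamma)\rho_0^c$. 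Inserting this decomposition and invoking the first step,
\[
\EE{\tr{\rho_k\bar\rho}\mid\hrhoe_0,\hrhoe_0}=\gamma\,\EE{\tr{\rho_k\bar\rho}\mid\rho_0,\hrhoe_0}+(1-\gamma)\EE{\tr{\rho_k\bar\rho}\mid\rho_0^c,\hrhoe_0},
\]
and because both conditional expectations are in $[0,1]$ while their convex combination tends to $1$, each one must tend to $1$. Running the same decomposition argument with $\hrhoe_k$ in place of $\rho_k$ gives $\EE{\tr{\hrhoe_k\bar\rho}\mid\rho_0,\hrhoe_0}\to 1$, and almost sure convergence of both $\rho_k$ and $\hrhoe_k$ to $\bar\rho$ follows exactly as in the concluding sentence of the proof of Theorem~\ref{thm:obscontroller}.

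The step I expect to be the most delicate is the linearity verification of the first paragraph: with the two layers of randomness (hidden $\mu_j$ and observed $\mur_j$) interwoven, one must be scrupulous that the non-linear feedback, despite being a complicated functional of the observation history, introduces no $\rho_0$-dependence into the trajectory weights, so that the entire $\rho_0$-dependence factors through an iterated composition of the un-normalized linear maps $\widetilde M^u_\mu$. Once this factorization is in place, the convex decomposition and dominated-convergence arguments are a direct transposition of the perfect-measurement case, with Theorem~\ref{thm:mainMeas} playing the role that Theorem~\ref{thm:main} plays in the proof of Theorem~\ref{thm:obscontroller}.
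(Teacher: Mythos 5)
Your proposal is correct and follows essentially the same route as the paper's proof: the same convex decomposition $\hrhoe_0=\gamma\rho_0+(1-\gamma)\rho_0^c$, the same linearity-of-the-unnormalized-Kraus-products argument, Theorem~\ref{thm:mainMeas} plus dominated convergence for the well-initialized case, and the same squeeze on the convex combination. The only cosmetic difference is that you track $\EE{\tr{\rho_k\bar\rho}\,|\,\rho_0,\hrhoe_0}$ directly via the tower property, whereas the paper passes through the correctly initialized filter $\hrho_k$ and then invokes the pure-state (extreme point) argument to pull the conclusion back to $\rho_k$; these two bookkeepings are equivalent.
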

\begin{proof}
  Let us first prove that $\hrho_k$  defined by~\eqref{eq:filt} converges almost surely towards $\ket{\bar n}\bra{\bar n}$. Since $\hrho_0=\rho_0$, we have $\ker(\hrhoe_0)\subset\ker(\hrho_0)$. Thus, there exist $\rho_c\in\DD$ and  $\gamma\in]0,1[$, such that,  $\hrhoe_0=\gamma\hrho_0+(1-\gamma)\hrho_0^c$. Similarly to the proof of Theorem~\ref{thm:obscontroller},
  $\EE{\tr{\hrho_k\bar\rho}|\hrho_0,\hrhoe_0}$  depends linearly on $\hrho_0$: $\EE{\tr{\hrho_k \bar\rho}~|~\hrhoe_0,\hrhoe_0}$ is given by $\gamma \EE{\tr{\hrho_k \bar\rho}~|~\hrho_0,\hrhoe_0}+(1-\gamma)\EE{\tr{\hrho_k \bar\rho}~|~\hrho_0^c,\hrhoe_0}$.
Moreover by Theorem~\ref{thm:mainMeas}, $\EE{\tr{\hrho_k \bar\rho}~|~\hrhoe_0,\hrhoe_0}$ converges almost surely towards~$1$, thus $\EE{\tr{\hrho_k \bar\rho}~|~\hrho_0,\hrhoe_0}$ and $\EE{\tr{\hrho_k \bar\rho}~|~\hrho_0^c,\hrhoe_0}$ converge also towards $1$. Consequently $\hrho_k$ converges almost surely towards $\ket{\bar n}\bra{\bar n}$.
Since  $\hrho_k$ is the  conditional expectation of $\rho_k$ knowing the past imperfect outcomes and control inputs and since  its limit   $\ket{\bar n}\bra{\bar n}$ is a pure state,   $\rho_k$ converges necessarily  towards  the same pure state almost surely.  Convergence of  $\hrhoe_k$ relies on similar arguments exploiting  the linearity of $\EE{\tr{\hrhoe_k\bar\rho}|\hrho_0,\hrhoe_0}$ versus $\hrho_0$.
\end{proof}
\section{The photon box} \label{sec:photonbox}
In this section, we give the explicit expression of the  feedback controller  which has been experimentally tested in Laboratoire Kastler-Brossel (LKB) at Ecole Normal Sup\'{e}rieure (ENS) de Paris. We briefly summarize how the control design elucidated in this paper is applied to the LKB experiment. We refer the interested reader to~\cite{sayrin:thesis,sayrin-et-al:nature2011} for more details. This feedback controller  has been obtained by the Lyapunov design discussed in previous sections.
\subsection{Experimental system}
\begin{figure}[]
        \centerline{\includegraphics[height=5cm]{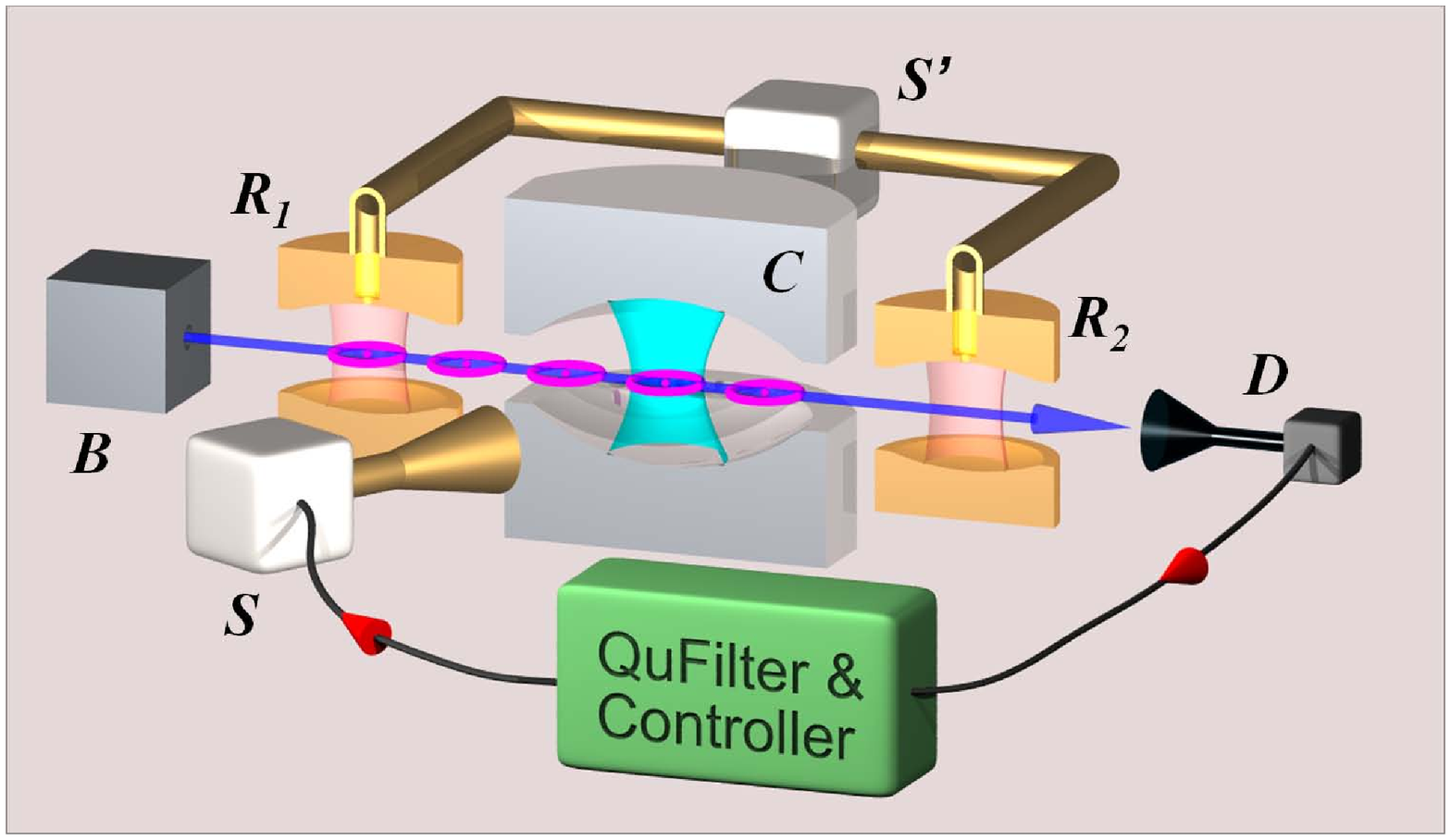}}
        \vspace{-0.2cm}
        \caption{Scheme of the experimental setup.}
        ~\label{fig:setup}
    \end{figure}

Figure~\ref{fig:setup} is a sketch of the experimental setup considered in~\cite{dotsenko-et-al:PRA09}. Two-level atoms, with state space spanned by the vectors $\ket e$ and $\ket g$ (excited and ground state respectively), act as qubits. The atoms, emitted from box B, interact individually with the electromagnetic microwave field stored in the cavity~$C$ and get entangled with it. The cavity is placed in between two additional cavities, $R_1$ and $R_2$, where the atomic state can be manipulated at will. Together, they form an atomic interferometer similar to those used in atomic clocks. The atomic state, $\ket e$ or $\ket g$, is eventually measured in the detection device~$D$.
Using the outcomes of those measurements, a quantum filter, run by a real-time computer, estimates the density matrix $\rho$ of the microwave field trapped in the cavity~$C$. It also uses all the available knowledge on the experimental setup, especially the limited detection efficiency $\varepsilon$ (percentage of atoms that are actually detected), the detection errors $\eta_e$ and $\eta_g$ ($\eta_{s}, s\in\{e,g\}$ is the probability to detect an atom in the wrong state).
A controller eventually calculates (state feedback) the amplitude $u\in\mathbb{R}$ of the classical microwave field that is injected into $C$ by an external source $S$ so as to bring the cavity field closer to the target state.
The succession of the atomic detection, the state estimation and the microwave injection define one iteration of our feedback loop. It corresponds to a sampling time of around $80~\mu$s.   For a more detailed   description see~\cite{haroche-raimond:book06,sayrin:thesis,sayrin-et-al:nature2011}.
\subsection{The controlled Markov process and quantum filter}
The state to be stabilized is the cavity state. The underlying Hilbert space is $\HH=\CC^{\nmax+1}$ which is assumed to be finite-dimensional (truncations to $\nmax$ photons). It admits $(\ket{0},\ket{1},\ldots,\ket{\nmax})$ as an ortho-normal basis.
Each basis vector $\ket{\nph}\in\CC^{\nmax+1}$ corresponds to a pure state, called photon-number state (Fock state), with precisely $\nph$ photons in the cavity, $\nph\in\{0,\ldots,\nmax\}.$ With  notations of Subsection~\ref{ssec:model}, $d=\nmax+1$ and the index $n=\nph+1$.
The three main processes that drive the evolution of our system are decoherence, injection, and measurement. Their action onto the system's state can be described by three super-operators $\superoperEvolution$, $\superoperInjection$ and $\superoperMeasurement$, respectively. We refer to~\cite{dotsenko-et-al:PRA09} for more details. All  operators are expressed in the Fock-basis  $(\ket{\nph})_{\nph=0,\ldots,\nmax}$ truncated to   $\nmax$ photons.

After taking into account our full knowledge about the experiment, we finally get the following state estimate at step $k+1$:
    \begin{equation}\label{eq:cavity}
        \hrhoe_{k+1} = \superoperMeasurement_{\mu'_k}(\superoperInjection_{u_{k-\tau}}(\superoperEvolution_\theta(\hrhoe_{k}))) \equiv \LL^{u_{k-\tau},\theta}_{\mu'_k}(\hrhoe_k),
    \end{equation}

with the following short descriptions for the super-operators $\superoperEvolution$, $\superoperInjection$ and $\superoperMeasurement.$

\begin{itemize}
\item The decoherence manifests itself through spontaneous loss or capture of a photon to or from the environment which is described as follows:
    \begin{equation}\label{eq:rho_jump}
        \rho \mapsto \superoperEvolution_\theta(\rho) = L_{0} \rho L_{0}^\dag  + L_{-} \rho L_{-}^\dag + L_{+} \rho L_{+}^\dag.
    \end{equation}
where $L_0=\Id - \theta(1/2 + \nth) \, \bN -  (\theta \nth/2) \, \Id$, $L_-=\sqrt{\theta(1+\nth)} \, \ba,$ and $L_+=\sqrt{\theta \nth} \, \ba^\dag,$
with  $\theta\ll 1,$ $\ba$ and $\ba^\dag$  photon annihilation and creation operators ($\ba \ket \nph = \sqrt{\nph}\ket{\nph\!-\!1}$ and $\ba^\dag \ket \nph = \sqrt{\nph\!+\!1} \ket{\nph\!+\!1}$) and with $\bN=\ba^\dag \ba$  the photon number operator ($\bN \ket \nph = \nph \ket \nph$). Besides, $\nth$ is the mean number of photons in the cavity mode at thermal equilibrium with its environment.
\item The evolution of the state $\rho$ after the control injection is modeled through
    \begin{equation}
        \rho \mapsto \superoperInjection_{\alphau}(\rho) =  D_\alphau \rho D_{-\alphau},
    \end{equation}
with $D_\alphau = \exp (\alphau \ba^\dag - \alphau \ba).$ In reality, the control at step $k$, $\alphau_k,$ is subject to a delay of $\tau>0$ steps which corresponds to the number of flying atoms between the cavity $C$ and the detector $D$.
 \item In the real experiment, the atom source is probabilistic and is characterized by a truncated Poisson probability distribution $P_a(n_a) \geq 0 $ to have $n_a\in\{0,1,2\}$ atom(s) in a sample (we neglect events with more than 2 atoms). This expands the set of the possible detection outcomes to $m=7$ values $\mu\in\{\o,g,e,gg,eg,ge,ee\}$, related to the following measurement operators,
    $L_{\o} = \sqrt{P_a(0)} \, \bid,$
    $L_{g}  = \sqrt{P_a(1)} \,\cos(\phi_\bN)$,
    $L_{e}  = \sqrt{P_a(1)} \,\sin(\phi_\bN)$,\\
    $L_{gg} = \sqrt{P_a(2)} \,\cos^2(\phi_\bN)$,
    $L_{ee} = \sqrt{P_a(2)} \,  \sin^2(\phi_\bN)$ and
    $L_{ge} = L_{eg}=\sqrt{P_a(2)} \, \cos(\phi_\bN) \sin(\phi_\bN)$
where $\phi_\bN =\frac{\phi_r + \phi_0({\bN}+1/2)}{2},$ $\phi_r$ and $\phi_0$ are physical parameters.

The real measurement process is not perfect: the detection efficiency is limited to $\varepsilon < 1$ and the state detection errors are non-zero ($0<\eta_{e/g}<1)$. These imperfections are taken into account by considering the left stochastic matrix $\eta_{\mur,\mu}$ which is given in~\cite{somaraju-et-al:acc2012}. Consequently, the optimal state estimate after measurement outcome $\mu'$ gets the following form:
    \begin{equation}\label{eqn:mainRes}
        \superoperMeasurement_{\mu'} (\rho) = \frac{\sum_{\mu=1}^{m} \eta_{\mu',\mu} L_{\mu} \rho L_{\mu}^\dag}
        {\tr{\sum_{\mu=1}^{m} \eta_{\mu',\mu} L_{\mu} \rho L_{\mu}^\dag}}.
    \end{equation}
The measurement operators $(L_\mu)_{1\leq\mu\leq m}$ are diagonal in the Fock basis $\{\ket \nph\}$, illustrating their quantum non-demolition nature with respect to the photon number operator and thus fulfilling Assumption~\ref{assum:imp}. Besides, Assumption~\ref{assum:hatimpt} can also be fulfilled by a proper choice of the experimental parameters $\phiR$ and $\phi_0$.

\end{itemize}

\subsection{Feedback controller}
For $\theta =0$ (no  cavity decoherence) the Markov model of density matrix $\hrho$  associated to  the filter~\eqref{eq:cavity} is exactly of the form~\eqref{eq:chiMeas} with $(\ket{\nph}\bra{\nph})_{\nph=0, \ldots, \nmax}$ being fixed-points in open-loop. Similarly, the underlying Markov process of the true cavity state $\rho$, which is unobservable in practice because of detection errors and delays, admits the same fixed points in open-loop.  With  parameters given in Subsection~\ref{ssec:exp} (except $\theta=0$), these Markov processes   satisfy  Assumptions~\ref{assum:imp}-\ref{assum:impt}-\ref{assum:imo} for $\rho$ and Assumptions~\ref{assum:imp}-\ref{assum:imo}-\ref{assum:hatimpt} for $\hrho$.    Moreover the Metzler matrix $R$ of Lemma~\ref{lem:first} is irreducible. Consequently the assumptions of Theorem~\ref{thm:mainMeas} are satisfied. The feedback law proposed in Theorem~\ref{thm:obscontrollerMeas} and relying on the filter state $\hrhoe$ will stabilize globally the unobservable state $\rho$ towards the  target  photon-number state $\ket{\nt}\bra{\nt}$. Numerous closed-loop simulations show that  taking $\epsilon=0$ in the  feedback law   does not  destroy  stability  and does not affect  the  convergence rates. This explains why in the simulations and experiments, we set $\epsilon=0$ despite the fact that Theorem~\ref{thm:obscontrollerMeas} guaranties convergence  only for  arbitrary small but strictly positive $\epsilon$.


For $\theta$ positive  and small, the $\ket{\nph}\bra{\nph}$s are no more fixed-points for $\rho$ and $\hrho$ in open-loop.
Let us detail how to  adapt  the feedback scheme of Theorem~\ref{thm:obscontrollerMeas}. At each step of an ideal experiment the  control $u_k$ minimizes the Lyapunov function $V_0(\tilde\rho_k)=\sum_{\nph} \sigma_{\nph} \bra \nph \tilde\rho_k \ket \nph$ ($\epsilon$ is set to zero) calculated for state $\tilde\rho_k = \superoperInjection_{u_k}\!(\rho_k)$. In our real experiment however, we also take into account decoherence and $\tau$ flying not-yet-detected samples and therefore choose $u_k$ to minimize $V_0(\tilde\rho_k)$ for $$\tilde\rho_k = \superoperInjection_{u_k}(\superoperEvolution_\theta(\KK^{u_{k-1},\theta}(\KK^{u_{k-2},\theta}(\ldots \KK^{u_{k-\tau},\theta}(\hrhoe_k)\ldots))))$$ with $\hrhoe$ given by~\eqref{eq:cavity}.
Here we have introduced the Kraus map of the real experiment
$\KK^{u,\theta}(\rho)=  \sum_{\mu=1}^m  L_\mu (\superoperInjection_u(\superoperEvolution_\theta(\rho))) {L_\mu}^\dag$.
The control $u$ minimizing $V_0$ is approximated by
$\underset{\xi\in[-\bar u,\bar u]}{\operatorname{argmax}}\left(a_1 \xi + a_2 \xi^2/2\right)$
with $a_1 = \Tr{[\ba^\dag\!-\!\ba,\sigma_{\bN}]\rho}$ and $a_2 = \Tr{[[\ba^\dag\!-\!\ba,\sigma_{\bN}],\ba^\dag\!-\!\ba]\rho}$, where $\sigma_{\bN}$ is the diagonal operator $\sum_{\nph} \sigma_{\nph} \ket{\nph}\bra{\nph}$. The coefficients $\sigma_{\nph}$ are computed using Lemma~\ref{lem:first} where, for $\nph\neq \nt$,  $\lambda_{\nph}$ are chosen negative  and  with a decreasing  modulus versus $\nph$ in order to compensate cavity decay. For $\nt=3$, we have compared in simulations different setting  and selected the profile displayed in Figure~\ref{fig:distance}.
\begin{figure}[!t]
        \centerline{\includegraphics [height=6cm]{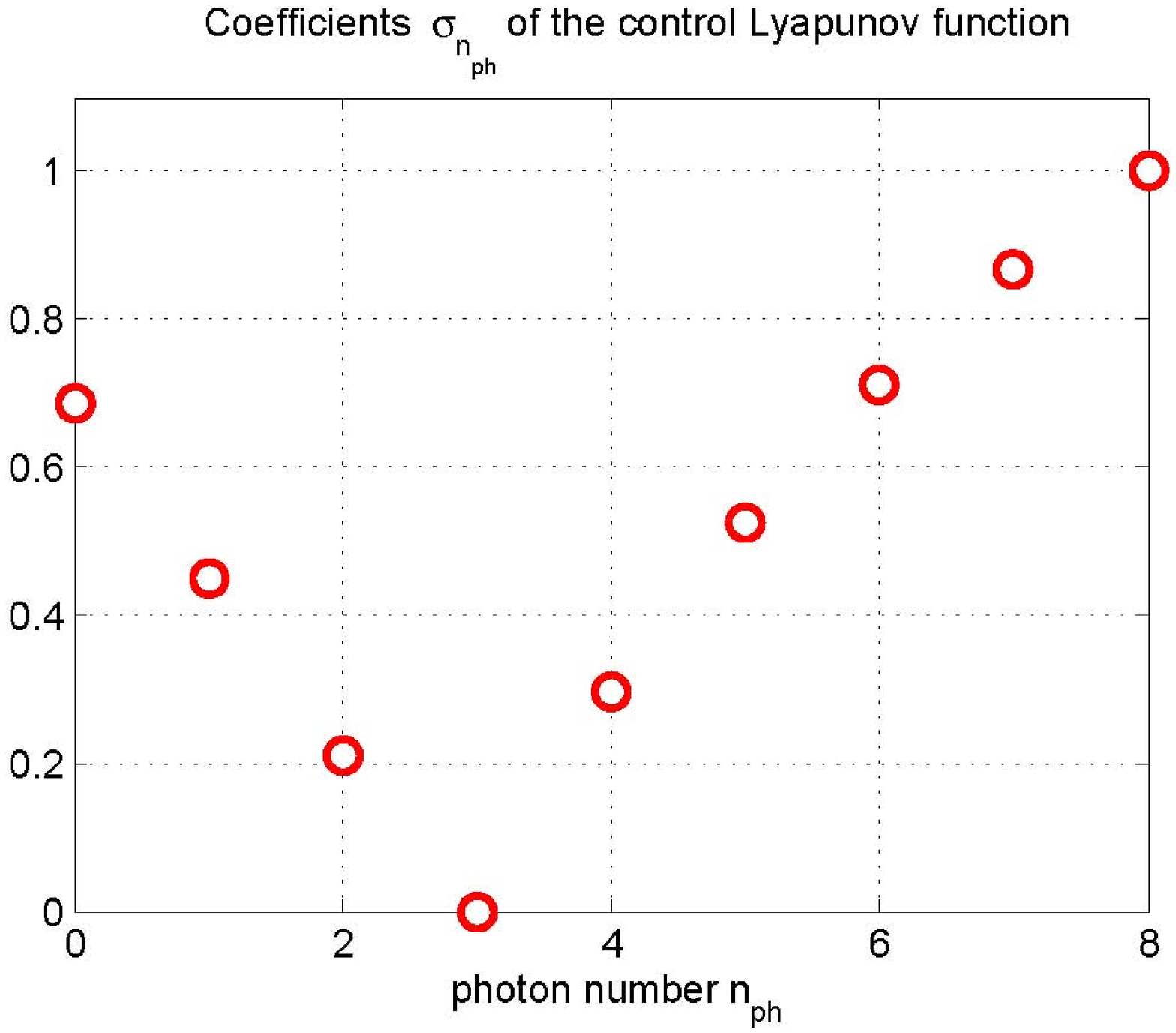}}
        \vspace{-0.2cm}
        \caption{Coefficients of the Lyapunov function $V_0(\rho)$ used in the feedback for  the simulation of Figure~\ref{fig:simulation} and for the experiment of Figure~\ref{fig:experimental}.}
        \label{fig:distance}
    \end{figure}

\subsection{Simulations and experimental results}\label{ssec:exp}
Closed-loop simulation of Figure~\ref{fig:simulation} shows a typical Monte-Carlo trajectory of the feedback loop aiming to  stabilize the 3-photon state $\ket {\nt=3}$. The experimental parameters used in the simulations are  the following: $\nmax=8$, $\phi_0 = 0.245\,\pi$, $\phi_r = \pi/2 - \phi_0(\nt + 1/2)$, $\langle n_{\textrm{a}} \rangle = 0.6$, $\varepsilon=0.35$, $\eta_e=0.13$, $\eta_g=0.11$, $\theta=0.014$,  $\nth = 0.05$, and $\tau=4$. For the feedback, $\sigma_{\nph}$ are given in Figure~\ref{fig:distance}, $\epsilon=0$ and $\bar u=1/10$.  The initial states $\rho_0$ and $\hrhoe_0$ take the following form: $\superoperInjection_{\sqrt{\nt}} (\ket{0}\bra{0})$.
\begin{figure}[!htp]
       \centerline{\includegraphics [width=0.5\textwidth]{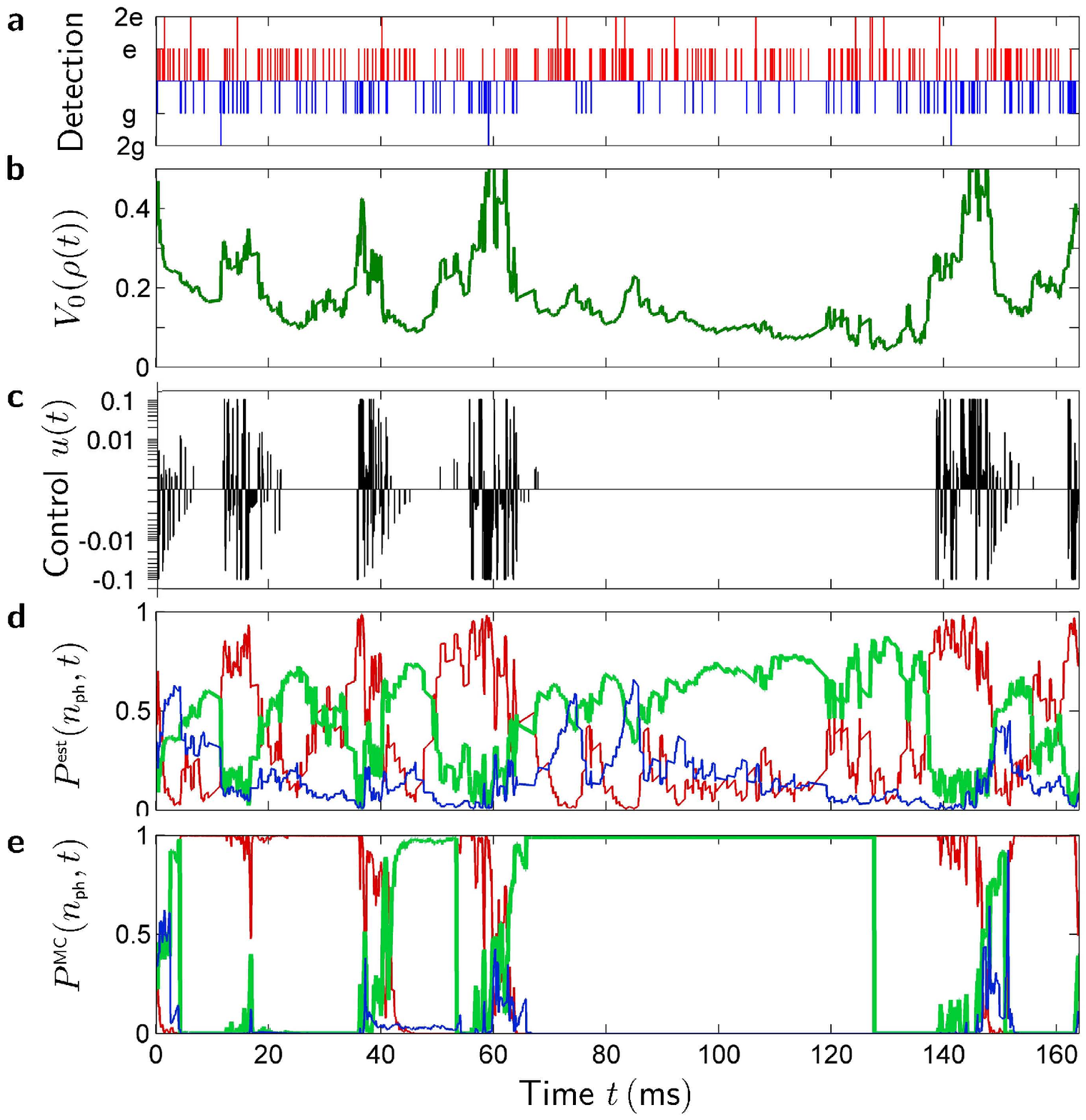}}
        \vspace{-0.2cm}
        \caption{Simulation of one Monte-Carlo trajectory in closed-loop with the target state $\ket {\nt=3}$. (a) Detection results. (b) Control Lyapunov function. (c) Control input. (d) Estimated photon number probabilities: $\sum_{\nph< \nt} \bra{\nph}\hrhoe\ket{\nph}$ in   red, $\bra{\nt}\hrhoe\ket{\nt}$ in  thick green, and $\sum_{\nph> \nt} \bra{\nph}\hrhoe\ket{\nph}$ in   blue. (e) Cavity  photon number probabilities: $\sum_{\nph< \nt} \bra{\nph}\rho\ket{\nph}$ in red, $\bra{\nt}\rho\ket{\nt}$ in thick green, and $\sum_{\nph> \nt} \bra{\nph}\rho\ket{\nph}$ in  blue. }
        \label{fig:simulation}
    \end{figure}

The results of the experimental implementation of the feedback scheme are presented in Figure~\ref{fig:experimental}. Figure~\ref{fig:experimental}(e) shows that the average fidelity of the target state is about $47\%$. Besides, the asymmetry between the distributions for $\nph<\nt$ and $\nph>\nt$ indicates the presence of quantum jumps occurring preferentially downwards ($\nt \rightarrow \nt-1)$.  Contrarily to the simulations of Figure~\ref{fig:simulation},   the cavity photon number probabilities  relying on $\rho$  are not accessible in the experimental data of Figure~\ref{fig:experimental} since we do not have access to the detection errors and to the cavity decoherence jumps~\cite{guerlin2007,Brune2008,Wang2008}. Nevertheless,  green curves  in  simulations of Figures~\ref{fig:simulation}(d) and~\ref{fig:simulation}(e) indicate  that when $\bra{\nt}\hrhoe\ket{\nt}$  exceeds  $8/10$,  $\rho$ coincides, with high probability, with $\ket{\nt}\bra{\nt}$.
\begin{figure}[!htp]
       \centerline{\includegraphics [width=0.5\textwidth]{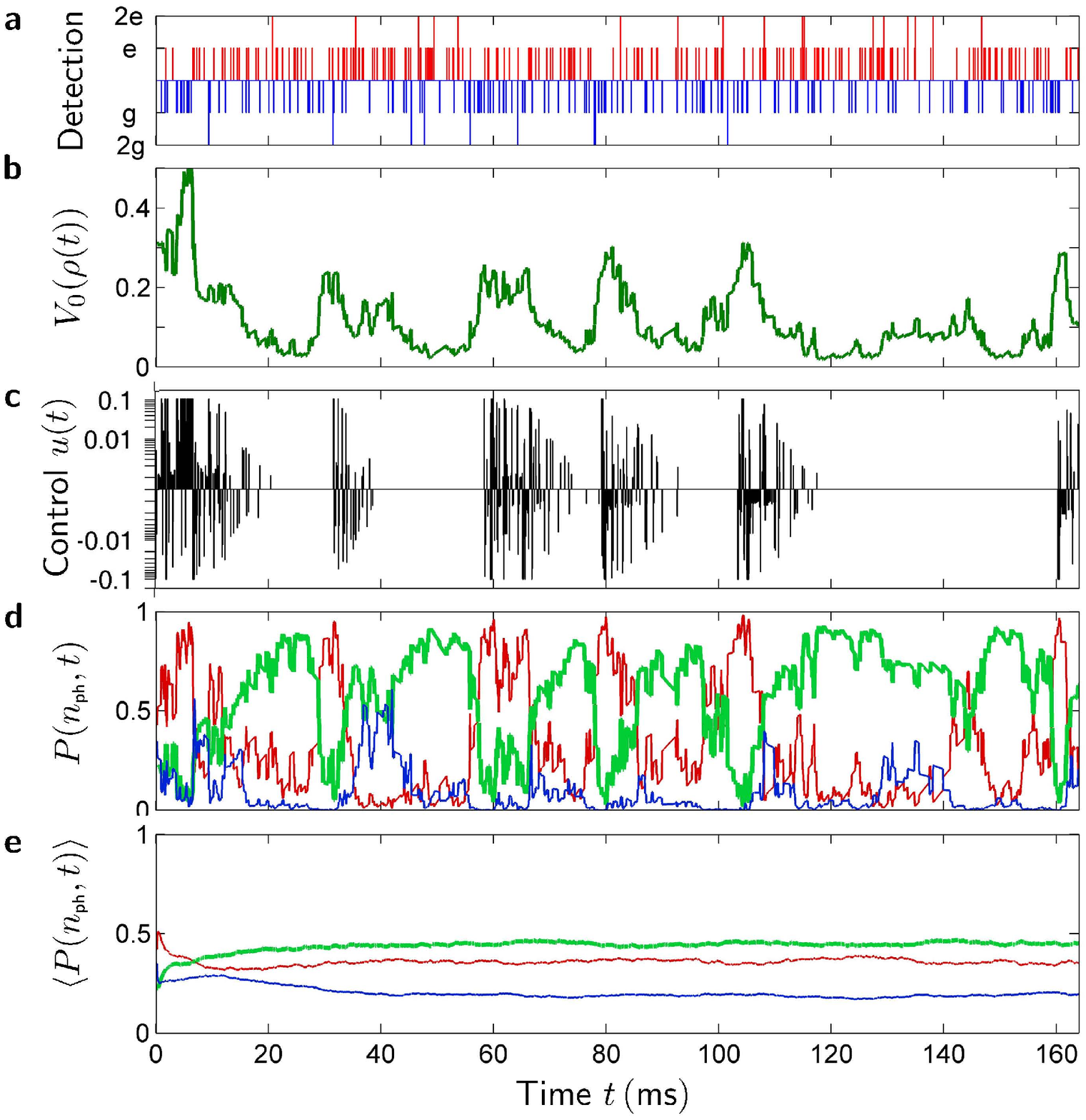}}
        \vspace{-0.2cm}
        \caption{Single experimental trajectory of the feedback loop with the target state $\ket {\nt=3}$. (a) Detection results. (b) Evolution of the control function. (c) Control injection. (d) Estimated photon number probabilities: $\sum_{\nph< \nt} \bra{\nph}\hrhoe\ket{\nph}$ in  red, $\bra{\nt}\hrhoe\ket{\nt}$ in thick green, and $\sum_{\nph> \nt} \bra{\nph}\hrhoe\ket{\nph}$ in  blue.  (e) Estimated photon number probabilities averaged over 4000 experimental  trajectories of the feedback loop.}
        \label{fig:experimental}
    \end{figure}

\section{Conclusion}
We have proposed a Lyapunov  design  for state-feedback stabilization of a discrete-time finite-dimensional quantum system  with  QND measurements. Extensions of this design  are possible in different directions such as
\begin{itemize}
   \item replacing  the continuous  and one-dimensional   input $u$ by a multi-dimensional one $(u_1,\ldots,u_p)$;
   \item assuming that $u$  belongs to a finite set of discrete values;
   \item taking an infinite dimensional state space as in~\cite{somaraju-et-al:RevMathPh2012} where the truncation to finite photon numbers is removed;
  \item  considering continuous-time systems similar to the ones  investigated in~\cite{mirrahimi-handel:siam07};
  \item  ensuring  convergence towards a sub-space~\cite{bolognani-ticozzi:ieee10}  instead of a pure-state and thus achieving   a goal  similar to error correction code as already proposed in~\cite{ahn-et-al:PRA02}.
\end{itemize}
\noindent{\bf Acknowledgements:} the authors thank Michel Brune, Serge Haroche and Jean-Michel Raimond  for enlightening discussions and advices.

\bibliographystyle{plain}

\appendix
\section{Appendix}
The following theorem is just an application of Theorem $1$ in~\cite[Ch. 8]{kushner-71}.
\begin{theorem}\label{thm:app}
\rm Let $X_k$ be a Markov chain on the compact state space $S.$ Suppose, there exists a continuous function $V(X)$ satisfying
\begin{equation}\label{eq:most}
\EE{V(X_{k+1})|X_k}-V(X_k)= -Q(X_k),
\end{equation}
where  $Q(X)$ is a non-negative continuous function of $X,$ then the $\omega$-limit set $\Omega$ (in the sense of almost sure convergence) of $X_k$ is contained by the following set
$
I_\infty=\{X|\quad Q(X)=0\}.
$
\end{theorem}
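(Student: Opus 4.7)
The plan is to realize this result as a consequence of the Doob--Meyer-type decomposition of the supermartingale $V(X_k)$ together with compactness and continuity. First, I would observe that since $Q \geq 0$, the hypothesis $\EE{V(X_{k+1})|X_k}-V(X_k) = -Q(X_k)$ makes $V(X_k)$ a supermartingale; moreover, continuity of $V$ on the compact state space $S$ gives uniform bounds $m \leq V \leq M$. By the martingale convergence theorem, $V(X_k)$ thus converges almost surely to some integrable random variable $V_\infty$.

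Next, I would introduce the auxiliary process
$$Z_n := V(X_n) + \sum_{k=0}^{n-1} Q(X_k).$$
A direct computation using the hypothesis shows $\EE{Z_{n+1}\mid \FF_n} = \EE{V(X_{n+1})\mid\FF_n} + \sum_{k=0}^n Q(X_k) = V(X_n) - Q(X_n) + \sum_{k=0}^n Q(X_k) = Z_n$, so $Z_n$ is a martingale. Since $V$ is bounded below by $m$ and the $Q(X_k)$ are non-negative, $Z_n \geq m$, i.e., $Z_n$ is a martingale bounded below. Again by the martingale convergence theorem, $Z_n$ converges almost surely to some finite $Z_\infty$.

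Subtracting the two almost-sure limits gives $\sum_{k=0}^{n-1} Q(X_k) = Z_n - V(X_n) \to Z_\infty - V_\infty$ almost surely. Because $Q \geq 0$ the partial sums are nondecreasing, so the series $\sum_{k\geq 0} Q(X_k)$ converges almost surely, which forces $Q(X_k)\to 0$ almost surely. Finally, for an almost-sure sample path, let $X^\star\in\Omega$ be any $\omega$-limit point and extract a subsequence $X_{k_n}\to X^\star$; continuity of $Q$ yields $Q(X^\star) = \lim_n Q(X_{k_n}) = 0$, hence $X^\star\in I_\infty$, establishing $\Omega\subset I_\infty$.

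The main technical point, rather than a genuine obstacle, is the construction of the martingale $Z_n$ (essentially the Doob decomposition made explicit); the rest is compactness plus continuity. A minor caveat to address is measurability/regularity of the $\omega$-limit set and the handling of the almost-sure exceptional null set, which is standard once one works pathwise on the full-measure event on which both $V(X_k)$ and $Z_n$ converge and $\sum_k Q(X_k)<\infty$.
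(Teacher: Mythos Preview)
Your argument is correct. The paper does not supply its own proof of this statement: it simply records the theorem as ``an application of Theorem~1 in~\cite[Ch.~8]{kushner-71}'' and moves on. What you have written is essentially the standard stochastic Lyapunov/LaSalle argument that underlies Kushner's result: the Doob decomposition $Z_n = V(X_n) + \sum_{k<n} Q(X_k)$ gives a martingale bounded below (hence convergent almost surely), and combined with almost-sure convergence of the bounded supermartingale $V(X_k)$ this yields $\sum_k Q(X_k)<\infty$ and thus $Q(X_k)\to 0$ almost surely; continuity of $Q$ and compactness of $S$ then transfer this to every $\omega$-limit point. So your proposal fills in exactly the details the paper delegates to the reference, with no substantive divergence in method.
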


\begin{lem} \label{lem:Q1} Consider the function $Q_1$ defined by~\eqref{eq:Q1} and  $ \widetilde{u}>0$. Then there
exists $C>0$ such that for all $(\rho,\beta_1)\in\DD \times [-\widetilde{u},\widetilde{u}] $ satisfying $Q_1(\rho,\beta_1)=0$, there
exists $n\in\{1,\ldots,d\}$ such that $\rho_{n,n}=\bra n \rho \ket n  \geq 1-C |\beta_1|$.
\end{lem}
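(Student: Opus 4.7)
The plan is to combine a compactness-contradiction argument with a local quadratic lower bound on $Q_1$ near the basis states. The natural extension of~\eqref{eq:Q1} to arbitrary $\beta_1$ admits the compact form
\[
Q_1(\rho,\beta_1)=\tfrac12\sum_n\mathrm{Var}_\mu\bigl[(\MM_\mu^{\beta_1}(\rho))_{n,n}\bigr],
\]
the variance being weighted by $p_{\mu,\rho}^{\beta_1}$. Thus $Q_1=0$ forces, for each $n$, the map $\mu\mapsto(\MM_\mu^{\beta_1}(\rho))_{n,n}$ to be constant on $\{\mu:p_{\mu,\rho}^{\beta_1}>0\}$. Specialised to $\beta_1=0$, this reads $|c_{\mu,n}|^2=p_{\mu,\rho}^0$ whenever $\rho_{n,n}>0$; Assumption~\ref{assum:impt} rules out more than one such index, so $\rho=\ket n\bra n$ for a single $n$.

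Suppose, for contradiction, that no constant $C$ works. For each integer $k\geq 1$ we get $(\rho_k,\beta_{1,k})\in\DD\times[-\tilde u,\tilde u]$ with $Q_1(\rho_k,\beta_{1,k})=0$ and $\rho_{k,n,n}<1-k|\beta_{1,k}|$ for every $n$. Then $k|\beta_{1,k}|<1$ forces $\beta_{1,k}\to 0$. Compactness of $\DD$ together with continuity of $Q_1$ (Assumption~\ref{assum:imo}) yields a subsequence with $\rho_k\to\ket{n^\star}\bra{n^\star}$ for some $n^\star$, so $\eta_k:=1-\rho_{k,n^\star,n^\star}=\sum_{n\ne n^\star}\rho_{k,n,n}\to 0^+$.

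To upgrade this to the linear rate $\eta_k=\OO(|\beta_{1,k}|)$, I would Taylor-expand $Q_1$ around $(\ket{n^\star}\bra{n^\star},0)$. For $n\ne n^\star$ and $\rho$ close to $\ket{n^\star}\bra{n^\star}$, a direct computation gives
\[
(\MM_\mu^0(\rho))_{n,n}=\tfrac{|c_{\mu,n}|^2}{|c_{\mu,n^\star}|^2}\rho_{n,n}+\OO(\eta\,\rho_{n,n}),
\]
and Assumption~\ref{assum:impt}, combined with $\sum_\mu|c_{\mu,n}|^2=\sum_\mu|c_{\mu,n^\star}|^2=1$, prevents the ratio $|c_{\mu,n}|^2/|c_{\mu,n^\star}|^2$ from being $\mu$-independent; hence $v_n:=\mathrm{Var}_\mu[|c_{\mu,n}|^2/|c_{\mu,n^\star}|^2]>0$. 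Summing the variances and using $\sum_{n\ne n^\star}\rho_{n,n}^2\geq\eta^2/(d-1)$, one obtains a local lower bound $Q_1(\rho,0)\geq c\,\eta^2$ with $c>0$. Next, $(\MM_\mu^0(\ket{n^\star}\bra{n^\star}))_{n,n}=\delta_{n,n^\star}$ is $\mu$-independent, so $\beta_1\mapsto Q_1(\ket{n^\star}\bra{n^\star},\beta_1)\geq 0$ attains its minimum value $0$ at $\beta_1=0$; by Assumption~\ref{assum:imo} its first derivative there vanishes, and by continuity $\partial_{\beta_1}Q_1(\rho,0)=\OO(\eta)$ near $\rho^\star$, giving $|Q_1(\rho,\beta_1)-Q_1(\rho,0)|\leq C|\beta_1|(\eta+|\beta_1|)$ locally. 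Imposing $Q_1=0$ yields $c\,\eta^2\leq C\eta|\beta_1|+C\beta_1^2$, which solves to $\eta\leq C''|\beta_1|$, contradicting $\eta_k>k|\beta_{1,k}|$ once $k>C''$.

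The main obstacle is verifying the two Taylor-level inputs simultaneously: the quadratic lower bound $Q_1(\rho,0)\geq c\,\eta^2$, rooted in the pairwise discrimination provided by Assumption~\ref{assum:impt}, and the vanishing of $\partial_{\beta_1}Q_1(\ket{n^\star}\bra{n^\star},0)$ that downgrades the potentially $\OO(|\beta_1|)$ contribution to $\OO(\eta|\beta_1|)$. Once both are in hand, the desired linear rate follows from routine absorption via $C\eta|\beta_1|\leq\tfrac{c}{2}\eta^2+\tfrac{C^2}{2c}\beta_1^2$.
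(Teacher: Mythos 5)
Your skeleton (contradiction, compactness, $\beta_{1,k}\to 0$, identification of the limit $\rho^\star=\ket{n^\star}\bra{n^\star}$ via Assumption~\ref{assum:impt}, then a linear rate near the limit) is the same as the paper's, and your variance rewriting of $Q_1$ together with the quadratic lower bound $Q_1(\rho,0)\geq c\,\eta^2$ is essentially sound (modulo the ill-defined ratio $|c_{\mu,n}|^2/|c_{\mu,n^\star}|^2$ when $c_{\mu,n^\star}=0$, a case Assumption~\ref{assum:impt} does not exclude and which you must handle separately). The genuine gap is the step ``by continuity $\partial_{\beta_1}Q_1(\rho,0)=\OO(\eta)$ near $\rho^\star$.'' Continuity of a derivative that vanishes at $\rho^\star$ gives only $o(1)$, and even a Lipschitz bound in $\rho$ would give $\OO(\norm{\rho-\rho^\star})$, which is $\OO(\sqrt{\eta})$, not $\OO(\eta)$: the derivative $\partial_{\beta_1}\bra{n}M_\mu^{\beta_1}\rho (M_\mu^{\beta_1})^\dag\ket{n}\big|_{\beta_1=0}=2\Re\bigl(c_{\mu,n}^*\bra{n}\tfrac{dM_\mu^u}{du}\big|_{0}\rho\ket{n}\bigr)$ involves the \emph{off-diagonal} entries $\rho_{n^\star,n}$, which are only bounded by $\sqrt{\rho_{n^\star,n^\star}\rho_{n,n}}\leq\sqrt{\eta}$, and these enter $\partial_{\beta_1}Q_1$ at first order (in particular through the terms with $c_{\mu,n^\star}=0$, where $p_{\mu,\rho}^0=\OO(\eta)$ as well). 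With only $\partial_{\beta_1}Q_1(\rho,0)=\OO(\sqrt{\eta})$ your absorption inequality becomes $c\,\eta^2\leq C\sqrt{\eta}\,|\beta_1|+C\beta_1^2$, i.e.\ $\eta\leq C''|\beta_1|^{2/3}$, and this does \emph{not} contradict $\eta_k>k|\beta_{1,k}|$ (it merely forces $|\beta_{1,k}|<(C''/k)^3$). So the contradiction does not close as written; you would have to actually prove the $\OO(\eta)$ bound, which is exactly the delicate point you flag at the end but do not resolve.

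The paper avoids this entirely by never estimating $Q_1$ itself near the limit. It extracts from $Q_1=0$ the \emph{exact} identities $\bra{n}M_\mu^{\beta_1}\rho(M_\mu^{\beta_1})^\dag\ket{n}=p_{\mu,\rho}^{\beta_1}\bra{n}\KK^{\beta_1}(\rho)\ket{n}$ (obtained by summing over $\nu$, with all denominators cleared), writes each as its value at $\beta_1=0$ plus $\beta_1\,b_{\mu,n}(\rho,\beta_1)$ with $b_{\mu,n}$ continuous hence merely \emph{bounded} on the compact set, and then solves these linear relations: the $n=n^\star$ relation expresses $p_{\mu,\rho}^0$ as $|c_{\mu,n^\star}|^2+\OO(|\beta_1|)$ (using $\rho_{n^\star,n^\star}>1/2$), and substituting into the relation for $n\neq n^\star$ gives $\bigl(|c_{\mu,n}|^2-|c_{\mu,n^\star}|^2+\OO(|\beta_1|)\bigr)\rho_{n,n}=\OO(|\beta_1|)$, whence $\rho_{n,n}\leq C_0|\beta_1|$ directly from Assumption~\ref{assum:impt}. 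No smallness of the remainder in $\rho$ is needed, and the off-diagonal terms never cause trouble. I recommend you either adopt that route or supply a genuine proof of the uniform bound $|\partial_{\beta_1}Q_1(\rho,0)|\leq C\eta$, including the degenerate case $c_{\mu,n^\star}=0$.
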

The proof is the following.
For all $n,\mu,\nu$, condition $Q_1=0$ implies that
$p_{\nu,\rho}^{\beta_1} \bra{n}M_{\mu}^{\beta_1}\rho {M_{\mu}^{\beta_1}}^\dag\ket{n}=p_{\mu,\rho}^{\beta_1}\bra{n}M_{\nu}^{\beta_1}\rho {M_{\nu}^{\beta_1}}^\dag\ket{n}$ .
Taking the sum over all $\nu$, we get
$
\bra{n}M_{\mu}^{\beta_1}\rho {M_{\mu}^{\beta_1}}^\dag\ket{n}=p_{\mu,\rho}^{\beta_1}\bra{n}\KK^{\beta_1}(\rho)\ket{n}
$ for all $n$ and $\mu$.
Since $M_\mu^{\beta_1}$ and $\KK^{\beta_1}$ are $C^2$-function of $\beta_1$,  these relations read
$
|c_{\mu,n}|^2 \rho_{n,n} = \left( \sum_{n'} |c_{\mu,n'}|^2 \rho_{n',n'} \right) \rho_{n,n} + {\beta_1}~ b_{\mu,n}(\rho,{\beta_1})
$
where $\rho_{n_1,n_2}$ stands for $\bra{n_1}\rho \ket{n_2}$ and  the scalar functions $b_{\mu,n}$ depend continuously on $\rho$ and ${\beta_1}$.\\
Let us  finish the proof  by contradiction. Assume that for all $C>0$, there exists $(\rho^C,\beta_1^C)\in\DD\times [-\widetilde{u},\widetilde{u}] $ satisfying $Q_1(\rho^C,\beta_1^C)=0$, such that
$\forall n\in\{1,\ldots,d\}$, $\rho^C_{n,n} \leq  1-C |\beta_1^C|$.
Take $C$ tending towards $+\infty$. Since $\rho^C$ and $\beta_1^C$ remain in a compact set, we can assume, up to some extraction process, that $\rho^C$ and $\beta_1^C$ converge towards $\rho^*$ and $\beta_1^*$ in $\DD$ and $[-\widetilde{u},\widetilde{u}]$. Since
$|\beta_1^C|\leq (1-\rho^C_{n,n})/C \leq 1/C$, we have  $\beta_1^*=0$.
Since
\begin{multline}\label{eq:rhoCn}
|c_{\mu,n}|^2 \rho^C_{n,n} = \rho^C_{n,n} \sum_{n'} |c_{\mu,n'}|^2 \rho^C_{n',n'}\\ +\beta_1^C~ b_{\mu,n}(\rho^C,\beta_1^C)
\end{multline}
we have by continuity for $C$ tending to  $+\infty$ and for all $n$ and $\mu$:
$
|c_{\mu,n}|^2 \rho^*_{n,n} = \left( \sum_{n'} |c_{\mu,n'}|^2 \rho^*_{n',n'} \right) \rho^*_{n,n}
$.
Thus there exists $n^*\in\{1,\ldots,d\}$ such that $\rho^* =\ket{n^*}\bra{n^*}$ (see the proof of Theorem~\ref{thm:first}).
Since $\rho^*_{n^*,n^*}=1$, for $C$ large enough, $\rho^C_{n^*,n^*} >1/2$ and thus
\begin{equation}\label{eq:appendaux}
 \sum_{n'} |c_{\mu,n'}|^2 \rho^C_{n',n'}  = |c_{\mu,n^*}|^2 - \beta_1^C~ \tfrac{b_{\mu,n^*}(\rho^C,\beta_1^C)}{\rho^C_{n^*,n^*}}
 .
\end{equation}
Taking $n\neq n^*$, by Assumption~\ref{assum:impt}, there exists $\mu$ such that $|c_{\mu,n}|^2\neq |c_{\mu,n^*}|^2$.
Replacing~\eqref{eq:appendaux} in~\eqref{eq:rhoCn} yields:
$
\left(|c_{\mu,n}|^2-|c_{\mu,n^*}|^2 + \beta_1^C~ \tfrac{b_{\mu,n^*}(\rho^C,\beta_1^C)}{\rho^C_{n^*,n^*}}\right)  \rho^C_{n,n} = \beta_1^C~ b_{\mu,n}(\rho^C,\beta_1^C).
$
Thus, there exists $C_0>0$, such that for $n\neq n^*$ and $C$ large enough
$\rho^C_{n,n} \leq C_0 |\beta_1^C|.$
But $\rho^C_{n^*,n^*} = 1- \sum_{n\neq n^*} \rho^C_{n,n} \geq 1 - C_0(d-1) |\beta_1^C|$. This  is in contradiction with  $\rho^C_{n^*,n^*} \leq 1 - C |\beta_1^C|$ as soon as $C > C_0(d-1)$.
\end{document}